\newtheorem{Definition}{Definition}[section]
\newtheorem{Theorem}[Definition]{Theorem}
\newtheorem{Lemma}[Definition]{Lemma}
\newcommand{\lc}{\mathcal{L}}
\newcommand{\rc}{\mathcal{R}}
\newcommand{\hc}{\mathcal{H}}
\newcommand{\jc}{\mathcal{J}}
\title{\Large \bf Nil extensions of simple regular ordered semigroup}
\author{A. K. Bhuniya  and K. Hansda \\
\footnotesize{Department of Mathematics, Visva-Bharati
University,}\\
\footnotesize{Santiniketan, Bolpur - 731235, West Bengal, India}\\
\footnotesize{anjankbhuniya@gmail.com}, \
\footnotesize{kalyanh4@gmail.com}}
\begin{document}

\maketitle

\begin{abstract}{\footnotesize}
In this paper, nil extensions of some special type of ordered
semigroups, such as,  simple regular ordered  semigroups, left simple
and right regular ordered semigroup. Moreover, we have characterized  complete semilattice decomposition of all ordered semigroups which are nil extension  of ordered semigroup.
\end{abstract}
{\it Key Words and phrases:} Nil extension; ordered regular semigroups,completely regular, left regular ordered semigroups; simple; left simple ordered semigroups.
\\{\it 2010 Mathematics subject Classification:} 20M10; 06F05.

\section{Introduction:}
Nil  extensions of a semigroup(without order), are precisely the
ideal extensions  by a nil semigroup. In 1984, S. Bogdanovic and S.
Milic \cite{bc1} characterized the semigroups(without order) which
are nil extensions of completely simple semigroups, where as, a
similar work was done by J. L Galbiati and M.L Veronesi\cite{GV} in
1980. Authors like S. Bogdanovic, M. Ciric, Beograd have
investigated this type extensions for regular semigroup, group,
periodic semigroup as well as completely regular semigroup(see
\cite{bc2}, \cite{bco2}).

 The concepts of nil  extensions have been extended to
ordered semigroups by Y. Cao in the paper \cite{Cao 2000} with
characterizing nil extensions  all type of simple ordered
semigroups. Cao was the first to do so.  The notion of ideal
extensions in ordered semigroups is actually introduced  by  N.
Kehayopulu and M. Tsingelis in \cite{ke2003}. In \cite{Ke2009} they
have  worked give on semigroups which are nil extensions of
Archimedean ordered semigroups.

This paper inspired by the results of \cite{bc2} \cite{bco2} \cite{bc3}. The aim of this paper is to describe all those ordered semigroups
which are  the nil extension of   simple
regular, left simple right regular and completely regular ordered semigroups.

Our paper organized as follows. The basic definitions and properties
of ordered semigroups  are presented in Section 2. Section 3 is
devoted to characterizing the nil extensions of simple regular, left
simple right regular and completely regular ordered semigroups. In
Section 4 we have described complete semilattice decomposition of
all ordered semigroups which are nil extension  of  ordered
semigroups.

\section{Preliminaries:}
In this paper $\mathbb{N}$ will provide the set of all natural
numbers. An ordered semigroup is a partiality ordered set $S$, and
at the same time a semigroup $(S,.)$ such that $( \forall a , \;b ,
\;x \in S ) \;a \leq b \Rightarrow  \;xa\leq xb \;and \;a x \leq b
x$. It is denoted by $(S,., \leq)$. For an
ordered semigroup $S$ and $H \subseteq S$, denote $$(H]_S := \{t \in
S : t \leq  h, \;for \;some \;h\in  H\}.$$ Also it is denoted by
$(H]$ if there is no scope of confusion.

The  relation $|$ on an ordered semigroup $S$ is defined by $$a|b
\Leftrightarrow  \;there \;exists \;x, y \in S^1 \;such \;that  \;b
\leq xay.$$
Let $I$ be a nonempty subset of an ordered semigroup $S$. $I$ is a
left (right) ideal of $S$, if $SI \subseteq I \;( I S \subseteq I)$
and $(I]= I$. $I$ is an ideal of $S$ if $I$ is both a left and a
right ideal of $S$. An (left, right) ideal $I \;of S$ is proper if
$I \neq S$. $S$ is left (right) simple if it does not contain proper
left (right) ideals. $S$ is  simple if it does not contain any
proper ideals. The intersection of all ideals of an ordered
semigroup $S$ , if nonempty, is called the kernel of $S$ and is
denoted by $K(S)$.

An ordered semigroup $S$ is called a group like ordered semigroup or
GLO-semigroup if for all $a, b \in S \;there \;are \;x, y \in S
\;such \;that \;a \leq xb \;and \;a \leq by$. An ordered semigroup
$S$ is called a left group like ordered semigroup or LGLO- semigroup
if for all $a, b \in S \;there \;are \;x \in S$ such that $a \leq
xb$ Similarly we define right group like ordered semigroup or
RGLO-semigroup. A band is a semigroup in which every element is an
idempotent. A commutative band is called a semilattice. Kehayopulu
\cite{Ke2006} defined Greens relations on an ordered semigroup $S$
as follows: $  a \lc b   \; if   \;L(a)= L(b),
  \;a \rc b   \; if   \;R(a)= R(b),  \;a \jc b   \; if   \;I(a)= I(b), \;and \;\hc= \;\lc \cap \;\rc$.
These four relations $\lc, \;\rc, \;\jc \;and \;\hc$ are equivalence
relations. In an  ordered semigroup $S$, an equivalence relation
$\rho$ is called left (right) congruence if for $a, b, c \in S \;a
\;\rho \;b \; implies \;ca \;\rho \;cb \;(ac \;\rho \;bc)$. $\rho$
is congruence if it is both left and right congruence. A congruence
$\rho$ on $S$ is called semilattice if  for all $a, b \in S \;a
\;\rho \;a^{2} \;and \;ab \;\rho \;ba$. A semilattice congruence
$\rho$ on $S$ is called complete if $a \leq b$ implies $(a,ab)\in
\rho$. The ordered semigroup $S$  is called complete semilattice of
subsemigroup of type $\tau$ if there exists a complete semilattice
congruence $\rho $ such that $(x)_{\rho}$ is a type $\tau$
subsemigroup of $S$. Equivalently: There exists a semilattice $Y$
and a family of subsemigroups $\{S\}_{\alpha \in Y}$ of type $\tau$
of $S$ such that:
\begin{enumerate}
\item \vspace{-.4cm}
$S_{\alpha}\cap S_{\beta}= \;\phi$ for any $\alpha, \;\beta \in \;Y
\;with \; \alpha \neq \beta,$
\item \vspace{-.4cm}
$S=\bigcup _{\alpha \;\in \;Y} \;S_{\alpha},$
\item \vspace{-.4cm}
$S_{\alpha}S_{\beta} \;\subseteq \;S_{\alpha \;\beta}$ for any
$\alpha, \;\beta \in \;Y,$
\item \vspace{-.4cm}
$S_{\beta}\cap (S_{\alpha}]\neq \phi$ implies $\beta \;\preceq
\;\alpha,$ where $\preceq$ is the order of the semilattice $Y$
defined by \\$\preceq:=\{(\alpha,\;\beta)\;\mid
\;\alpha=\alpha\;\beta\;(\beta\;\alpha)\}$ \cite{ke1}.
\end{enumerate}

Before going to the main results of this paper we shall illustrate
some important  definition and results related to  nil extensions of
ordered semigroups.

\begin{Lemma}\cite{Cao 2000}
Let $I$ be an ideal of an ordered semigroup $S$. Then $I = K(S)$ if
and only if $I$ is  simple.
\end{Lemma}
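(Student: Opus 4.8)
\emph{Proof strategy.} The statement is a biconditional relating the ideal $I$ to $K(S)$, the intersection of all ideals of $S$; recall that since $I$ is itself an ideal of $S$ one always has $K(S)\subseteq I$ whenever $K(S)$ is defined, i.e. whenever that intersection is nonempty. So in each direction the real content is a single inclusion, and I would attack the two directions separately.

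For the ``if'' part, I would assume $I$ is simple and prove $I\subseteq J$ for \emph{every} ideal $J$ of $S$; together with $K(S)\subseteq I$ this forces $I=K(S)$. The plan is to show $I\cap J$ is a nonempty ideal of the ordered semigroup $I$ and then invoke simplicity of $I$. For nonemptiness: given $a\in I$ and $b\in J$, the product $ab$ lies in $I$ (because $I$ is a right ideal of $S$) and in $J$ (because $J$ is a left ideal of $S$), so $ab\in I\cap J$. For the ideal axioms inside $I$: $I(I\cap J)\subseteq I\cap J$ and $(I\cap J)I\subseteq I\cap J$ follow from $I$ being a subsemigroup of $S$ and $J$ an ideal of $S$, while the down-closure $(I\cap J]_I=I\cap J$ follows from $(J]_S=J$. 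Since $I$ has no proper ideals, $I\cap J=I$, i.e. $I\subseteq J$, and we are done.

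For the ``only if'' part, assume $I=K(S)$, let $J$ be an ideal of the ordered semigroup $I$, and aim to show $J=I$. I would fix $a\in J$ and work with the set $(IaI]_S$. First I would check it is an ideal of $S$: it is nonempty since $a^3\in IaI$; for $s\in S$ one has $s(iai')=(si)ai'\in IaI$ because $si\in I$, and symmetrically on the right, so $S(IaI]_S\subseteq(IaI]_S$ and $(IaI]_S S\subseteq(IaI]_S$; and it is a down-set by construction. Moreover $(IaI]_S\subseteq(I]_S=I$. Hence $(IaI]_S$ is an ideal of $S$, so $I=K(S)\subseteq(IaI]_S\subseteq I$, forcing $(IaI]_S=I$. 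Now take any $b\in I$; then $b\le xay$ for some $x,y\in I$, and since $J$ is an ideal of $I$ we get $xa\in IJ\subseteq J$ and then $xay\in JI\subseteq J$. Thus $b\le xay\in J$ with $b\in I$, so $b\in(J]_I=J$. Therefore $I\subseteq J$, hence $J=I$ and $I$ is simple.

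The step I expect to be the main obstacle is the choice of auxiliary ideal in the ``only if'' direction. The instinctive choice, the principal ideal $(S^1aS^1]_S$, does not work: a factorization $b\le xay$ with $x,y\in S^1$ cannot be pushed into $J$, since $J$ absorbs products only with elements of $I$, not of $S$. Replacing it by $(IaI]_S$ is what repairs this — one must verify it really is an ideal of $S$, which uses that $I$ is an ideal of $S$ so that translating $IaI$ by elements of $S$ keeps one inside $IaI$, after which the factorization automatically takes place inside $I$ and the ideal property of $J$ in $I$ applies. Beyond this, the only things to watch are the routine down-set manipulations via $(I]_S=I$, $(J]_S=J$, $(J]_I=J$, and the observation that $K(S)$ is defined here because the intersection of all ideals of $S$ contains the nonempty set $I$.
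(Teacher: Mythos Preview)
The paper does not supply its own proof of this lemma; it is quoted from Cao and Xinzhai (reference \cite{Cao 2000}) as a known result, so there is nothing in the paper to compare against. Your argument is correct and is essentially the standard one: intersect $I$ with an arbitrary ideal of $S$ and invoke simplicity for one direction, and in the other direction manufacture an ideal of $S$ inside $I$ from an element of $J$ and invoke minimality of $K(S)$. Your observation that the auxiliary ideal must be $(IaI]_S$ rather than $(S^1aS^1]_S$, so that the resulting factorisation $b\le xay$ has $x,y\in I$ and can be absorbed by $J$, is exactly the point.

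One small slip in your closing commentary: you list $(J]_S=J$ among the routine facts to check, but $J$ is only an ideal of the ordered semigroup $I$, not of $S$, so $(J]_S=J$ need not hold. This does no harm, since the body of your proof uses only $(J]_I=J$, which is precisely what ``$J$ is an ideal of $I$'' provides.
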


\begin{Definition}\cite{Cao 2000}
 Let $S$ be an ordered semigroup with zero $0 (i.e., \;there
\;exists \;0 \in S \;such \;that \;0x = x0 = 0 \;for \;any \;x \in S
)$ . An element $a \in S$ is called a nilpotent if $a^n \leq 0$ for
some $n \in \mathbb{N}$. The set of all nilpotents of $S$ is denoted
by $Nil(S)$. $S$ is called nil ordered semigroup (nilpotent) if $S =
Nil(S)$.
\end{Definition}

\begin{Lemma}\cite{ke2003}
 Let $(S, .,\leq)$ be an ordered semigroup and $K$ an
ideal of $S$. Let  $S/K := (S\backslash K) \;\cup \{0\}$, where $0$
is an arbitrary element of $K \;(S\backslash K$ is the complement of
$K \;to \;S$). Define an operation $" * "$ and an order $" \preceq "
\;on \;S/K$ as follows:
 $$* : S/K \times S/K \longrightarrow S/K \;| (x, y) \longrightarrow (x
 * y) \;where$$
 \[ x * y :=\left \{\begin{array}{l l}
 xy &  \mbox{if}~~~~ \;xy \in  S/K \\
 0  &  \mbox{if}~~~~ \;xy \in K
 \end{array}\right .\]

$\preceq := (\leq \cap [(S/ K) \times (S/ K)]) \cup \{(0, x)| x \in
S/K\}$. Then $(S/K, *,\preceq)$ is an ordered semigroup and $0$ is
its zero.
\end{Lemma}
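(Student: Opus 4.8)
The plan is to treat $S/K$ as the Rees quotient of $S$ by the ideal $K$, with the single point $0$ standing in for the collapsed ideal, and then to verify in turn the defining data of an ordered semigroup with zero: that $*$ is a well-defined binary operation on $S/K$, that it is associative, that $\preceq$ is a partial order on $S/K$, that $\preceq$ is compatible with $*$, and that $0$ is at once a multiplicative zero for $*$ and the least element for $\preceq$. Throughout, the only properties of $K$ that are used are (i) $K$ is an ideal, so $KS\subseteq K$ and $SK\subseteq K$, whence $uvw\in K$ as soon as one of $uv$, $vw$ lies in $K$; and (ii) $K$ is downward closed, $(K]=K$, so no element outside $K$ can lie below an element of $K$. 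Since $0\in K$, (i) also gives $0u\in K$ and $u0\in K$ for every $u$, hence $0*u=u*0=0$.

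Well-definedness of $*$ is immediate once one unravels the two clauses: for $x,y\in S/K\subseteq S$ the product $xy$ is formed in $S$; if $xy\notin K$ then $xy\in S\setminus K\subseteq S/K$ and $x*y=xy\in S/K$, while if $xy\in K$ then $x*y=0\in S/K$, and on the overlap (the case where $xy$ happens to be the chosen element $0$) both clauses return $0$. For associativity I would split on whether $xyz\in K$. If $xyz\notin K$, then by (i) neither $xy$ nor $yz$ lies in $K$, so all products occurring are the genuine ones in $S$ and $(x*y)*z=(xy)z=x(yz)=x*(y*z)$. If $xyz\in K$, then both $(x*y)*z$ and $x*(y*z)$ equal $0$: for the first, either $xy\in K$, giving $x*y=0$ and $0*z=0$, or $xy\notin K$, giving $x*y=xy$ and $(xy)*z=0$ because $(xy)z=xyz\in K$; the second is symmetric.

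Next I would check that $\preceq$ is a partial order. Reflexivity is clear, $x\le x$ in $S$ for $x\ne 0$ and $(0,0)$ is among the adjoined pairs. For antisymmetry and transitivity the single observation needed is that $y\preceq 0$ forces $y=0$: indeed $y\preceq 0$ with $y\ne 0$ would mean $y\le 0$ in $S$ with $y\in S/K$, but $0\in K$ and $(K]=K$ then give $y\in K\cap(S/K)=\{0\}$, a contradiction. Granting this, antisymmetry and transitivity follow from the corresponding properties of $\le$ on $S$ together with the trivial behaviour of $0$; this also records that $0$ is the least element, and combined with $0*u=u*0=0$ it shows $0$ is a zero of $(S/K,*,\preceq)$.

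Compatibility of $\preceq$ with $*$ is the only step requiring a little care, and it is precisely where downward-closedness of $K$ is used. Suppose $x\preceq y$ and let $z\in S/K$; I must show $z*x\preceq z*y$, the right-hand version being symmetric. If $x=0$ then $z*x=z*0=0\preceq z*y$ by the adjoined pairs. Otherwise $x\le y$ in $S$ with $x,y\in S/K$, so $zx\le zy$ in $S$ by compatibility in $S$. If $zx\in K$ then $z*x=0\preceq z*y$ again. If $zx\notin K$, then $zy\notin K$ as well, for $zy\in K$ together with $zx\le zy$ and $(K]=K$ would force $zx\in K$; hence $z*x=zx\le zy=z*y$ in $S$ with both members in $S/K$, i.e. $z*x\preceq z*y$. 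I expect no genuine obstacle here; the one thing to keep in mind is that $0$ is merely \emph{some} element of $K$ and not a zero of $S$, so every claim about $0$ must be routed through ``$0\in K$, $K$ an ideal, $(K]=K$'' rather than through any absorbing property inside $S$.
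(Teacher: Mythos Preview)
Your proof is correct and complete; it is the standard verification that the Rees quotient of an ordered semigroup by an ideal is again an ordered semigroup with zero, and each step is handled carefully, including the two places where the hypothesis $(K]=K$ is genuinely needed (namely, that $y\preceq 0$ forces $y=0$, and that $zx\notin K$ with $zx\le zy$ forces $zy\notin K$).

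As for comparison: the paper does not supply a proof of this lemma at all. It is stated as a quoted result from Kehayopulu and Tsingelis \cite{ke2003} and is used as background for the definition of ideal and nil extensions. Your write-up therefore fills in what the paper deliberately omits, and it does so along exactly the lines one would expect the original source to follow.
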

If there is no confusion, we denote the multiplication and the order
in different ordered semigroups by same symbol.

We know that $(S/K, *)$ is a Rees semigroup where the congruence is
the Rees Congruence on $S$. Now we shall proceed to define the ideal
extension of ordered semigroup in Kehayopulu's way.
\begin{Definition}\cite{ke2003}
Let $(S, \cdot,\leq_S)$ be an ordered semigroup, $(Q, \cdot,\leq_Q)$
an ordered semigroup with $0, \;S\cap Q^*= \phi, \;where \; Q^*=
Q\setminus \{0\}$. An ordered semigroup $(V, \cdot, \leq_V)$ is
called an ideal extension of $S$ by $Q$ if there exists an ideal
$S'$ of $V$ such that $(S', \cdot, \leq_{S'})\approx (S, \cdot,
\leq_{S})$ and $(V/ S', *, \preceq)\approx (Q, \cdot,\leq_Q)$, where
$\leq_{S'}=\leq_V \cap (S'\times S')$ and $"*", \;"\preceq"$ the
multiplication and the order on $V/ S'$ defined in \ref{1}
\end{Definition}
So,  now we are ready to consider  the definition of nil extension
of ordered semigroup given by Cao.
\begin{Definition}\cite{Cao 2000}
Let $I$ be an ideal of an ordered semigroup $S$. Then $(S/I,.,
\preceq)$ is called the Rees factor ordered semigroup of $S \;modulo
\;I$, and $S$ is called an ideal extension of $I$ by ordered
semigroup $S/I$. An ideal extension $S \;of \;I$ is called a
nil-extension of $I$ if $S/I$ is a nil ordered semigroup.
\end{Definition}
\begin{Lemma}\cite{Cao 2000}
 Let $S$ be an ordered semigroup and $I$ an ideal of $S$. Then
the following are equivalent:
\begin{itemize}
\item[(i)] $S$ is a nil-extension of $I$;
\item[(ii)] $(\forall a\in S)(\exists m \in \mathbb{N}) \;a^m \in I$.
\end{itemize}
\end{Lemma}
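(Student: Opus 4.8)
The plan is to unwind both statements into assertions about powers and then pass back and forth between powers computed in $S$ and powers computed in the Rees factor $(S/I,*,\preceq)$ of the lemma of \cite{ke2003}. Throughout I use two elementary observations about $S/I$: first, a product of elements of $S\setminus I$ equals its value in $S$ when that value lies in $S\setminus I$ and equals $0$ when that value lies in $I$, and $0$ is absorbing; second, $\preceq$ restricted to $(S\setminus I)\times (S\setminus I)$ is exactly $\leq$, with the only extra comparabilities being the pairs $(0,x)$. A short induction on $k$ then records the bookkeeping fact: for $a\in S\setminus I$, one has $\bar a^{\,k}=a^{k}$ in $S/I$ as long as $a^{j}\notin I$ for every $j\le k$, while as soon as some power of $a$ enters $I$ (which, $I$ being an ideal, all later powers also do) the corresponding and all subsequent powers of $\bar a$ equal $0$.

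For (ii)$\Rightarrow$(i) I would argue directly that $S/I$ is nil. Let $\bar a\in S/I$. If $\bar a=0$ then $\bar a^{\,1}\preceq 0$ trivially. If $\bar a\neq 0$, identify $\bar a$ with $a\in S\setminus I$; by (ii) there is $m\in\mathbb N$ with $a^{m}\in I$, and by the bookkeeping fact $\bar a^{\,m}=0$, so $\bar a^{\,m}\preceq 0$. Hence $S/I=Nil(S/I)$, i.e.\ $S/I$ is a nil ordered semigroup, which is exactly what it means for $S$ to be a nil-extension of $I$.

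For (i)$\Rightarrow$(ii) let $a\in S$; if $a\in I$ take $m=1$, so assume $a\in S\setminus I$ and view $a$ as the nonzero element $\bar a$ of $S/I$. Since $S/I$ is nil there is $n\in\mathbb N$ with $\bar a^{\,n}\preceq 0$. I split into two cases on the value of $\bar a^{\,n}$. If $\bar a^{\,n}=0$, then some $a^{j}$ with $j\le n$ must lie in $I$ (otherwise the bookkeeping fact gives $\bar a^{\,n}=a^{n}\notin I$, forcing $\bar a^{\,n}\neq 0$); as $I$ is an ideal, $a^{n}\in I$. If $\bar a^{\,n}\neq 0$, then no $a^{j}$ ($j\le n$) lies in $I$ and $\bar a^{\,n}=a^{n}\in S\setminus I$; since $\bar a^{\,n}\neq 0$ the relation $\bar a^{\,n}\preceq 0$ cannot be one of the added pairs, so it comes from $\leq$, i.e.\ $a^{n}\leq 0$ in $S$, and then $a^{n}\in (I]_{S}=I$ because $I$ is downward closed — contradicting $a^{n}\notin I$. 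Thus only the first case occurs and $a^{n}\in I$, which is (ii).

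The bulk of the work (the induction relating $\bar a^{\,k}$ to $a^{k}$, the absorbing property of $0$) is routine and rests only on the well-definedness of $(S/I,*,\preceq)$. The single load-bearing step, and the one I would state most carefully, is the handling of the order in the case $\bar a^{\,n}\neq 0$ of (i)$\Rightarrow$(ii): one must use precisely the shape of $\preceq$ to conclude $a^{n}\leq 0$ in $S$ and then invoke $(I]_{S}=I$ to rule out a power of $a$ that would be $\preceq$-below $0$ yet stay outside $I$.
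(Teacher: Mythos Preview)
The paper does not supply its own proof of this lemma; it is quoted from \cite{Cao 2000} and used as a black box throughout. Your argument is correct and is essentially the canonical one: identify $*$-powers in $S/I$ with $S$-powers until the first entry into $I$, use that $0$ is absorbing, and in the direction (i)$\Rightarrow$(ii) dispose of the residual case $\bar a^{\,n}\preceq 0$ with $\bar a^{\,n}\neq 0$ by reading off $a^{n}\leq 0$ from the explicit description of $\preceq$ and invoking $(I]=I$. The only cosmetic remark is that your ``load-bearing'' Case~2 is in fact vacuous (it ends in a contradiction), so the proof could be streamlined by simply observing that $\bar a^{\,n}\preceq 0$ with $\bar a^{\,n}\in S\setminus I$ forces $a^{n}\leq 0\in I$ and hence $a^{n}\in I$, merging the two cases; but as written the logic is sound.
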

In {\cite{bh1}, we have  introduced the notion of Clifford and left
Clifford ordered semigroups and characterized their structural
r
\begin{Lemma}
Let $S$ be an ordered semigroup. Then
\begin{enumerate}
  \item \vspace{-.4cm}
 for $a \in Reg_\leq(S)$, there is $x \in Reg_\leq(S)$ such that $a \leq
axa$.
 \item \vspace {-.4cm}
for $a \in \mathbf{R}Reg_\leq(S)$ and any $k \in \mathbb{N}, \;a^k
\in (a^{k+n} S]$ for all $n \in \mathbb{N}$.
\end{enumerate}
\end{Lemma}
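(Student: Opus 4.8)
The plan is to handle the two parts separately, in each case exhibiting the required witness explicitly from the hypothesis and then verifying the asserted inequalities by iterated substitution, using nothing beyond the two compatibility axioms $a \leq b \Rightarrow xa \leq xb$ and $a \leq b \Rightarrow ax \leq bx$.

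For part (1), the hypothesis $a \in Reg_\leq(S)$ supplies an element $y \in S$ with $a \leq aya$, and I would take $x := yay \in S$ as the witness. To obtain $a \leq axa$, multiply $a \leq aya$ on the left by $ay$, which gives $aya \leq ay(aya) = a(yay)a = axa$, and chain this with $a \leq aya$. To see that $x \in Reg_\leq(S)$, I claim $x \leq xax$: multiplying $a \leq aya$ on the left and on the right by $y$ gives $yay \leq y(aya)y = yayay$, and then multiplying this inequality on the left by $ya$ gives $yayay \leq ya(yayay) = yayayay$; chaining, $yay \leq yayay \leq yayayay = (yay)a(yay) = xax$. Hence $x$ is $\leq$-regular and $a \leq axa$, as required. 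The choice $x = yay$ is the order-analogue of the classical trick: in a plain semigroup, if $a = aya$ then $yay$ is a genuine inverse of $a$.

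For part (2), recall that $a \in \mathbf{R}Reg_\leq(S)$ means $a \leq a^2x$ for some $x \in S$. The auxiliary step is that $a^m \leq a^{m+1}x$ for every $m \in \mathbb{N}$: for $m = 1$ this is the hypothesis, and for $m \geq 2$ it follows from left compatibility since $a^m = a^{m-1}a \leq a^{m-1}(a^2x) = a^{m+1}x$. Then, fixing $k \in \mathbb{N}$, one argues by induction on $n$, applying the auxiliary inequality to the leading power of $a$ at each step and multiplying the previous inequality by $x$ on the right; this yields $a^k \leq a^{k+1}x \leq a^{k+2}x^2 \leq \cdots \leq a^{k+n}x^n$. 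Since $x^n \in S$, we conclude $a^k \leq a^{k+n}x^n$, that is $a^k \in (a^{k+n}S]$, for every $n \in \mathbb{N}$.

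Neither part is genuinely difficult; the only thing that needs attention is to perform each substitution on the correct side, so that at every stage one multiplies a previously established inequality by fixed elements and invokes compatibility rather than manipulating equalities. The single mild trick is the passage from $y$ to $yay$ in part (1); everything else is bookkeeping with $\leq$.
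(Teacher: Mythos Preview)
Your argument is correct in both parts. In part~(1) the standard sandwich $x = yay$ does the job: from $a \leq aya$ one gets $aya \leq a(yay)a$ by left-multiplying with $ay$, and the chain $yay \leq yayay \leq (yay)a(yay)$ is obtained exactly as you describe. In part~(2) the induction $a^k \leq a^{k+1}x \leq a^{k+2}x^2 \leq \cdots \leq a^{k+n}x^n$ is valid, since at each stage you right-multiply the auxiliary inequality $a^{k+j} \leq a^{k+j+1}x$ by $x^j$ and chain.

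As for comparison with the paper: the lemma is stated in the preliminaries section without proof, immediately after a truncated sentence referring to the authors' companion paper \cite{bh1}, so the present paper offers no argument of its own to compare against. Your proof is the natural one and is precisely what one would expect the omitted proof in \cite{bh1} to contain.
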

\section{nil extension of completely regular ordered semigroups:}
\begin{Theorem}
An ordered semigroup $S$ is nil extensions of left simple and right
regular ordered semigroup if and only if  for all $\;a,b \in S,
\;there \;exists \;n \in \mathbb{N} \;such \;that \;a^{n}\in
(a^{2n}Sb] $ and for all $a \in S, \;and \;b \in
\mathbf{R}Reg_\leq(S)$ such that $a\leq ba$ implies $a\leq a^{2}x
\;for \;some \;x\in S$.
\end{Theorem}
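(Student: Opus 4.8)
The plan is to prove the two implications separately; the ``only if'' direction is routine bookkeeping, while the ``if'' direction carries the real content. For necessity, suppose $S$ is a nil extension of an ideal $K$ that, as an ordered semigroup, is left simple and right regular. An ideal is in particular a left ideal, so left simplicity forces $K$ to be simple; hence $K=K(S)$ by the lemma that an ideal equals the kernel exactly when it is simple, and by the lemma characterising nil extensions every $a\in S$ has a power lying in $K$. I shall use two facts: (i) since $K$ is left simple, for $p,q\in K$ the set $(Kq]$ is a nonempty left ideal of $K$, hence equals $K$, so $p\le uq$ for some $u\in K$; (ii) since $K$ is right regular, each $p\in K$ satisfies $p\le p^2v$ for some $v\in K$. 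Given $a,b\in S$, choose $m,\ell\in\mathbb N$ with $a^m,b^\ell\in K$; by (ii) $a^m\le (a^m)^2v=a^{2m}v$ with $v\in K$, and by (i) $v\le ub^\ell$ with $u\in K$, so $a^m\le a^{2m}ub^\ell=a^{2m}(ub^{\ell-1})b\in(a^{2m}Sb]$ (with $ub^{\ell-1}=u$ when $\ell=1$), which is the first condition with $n=m$. For the second condition, if $a\le ba$ then left multiplication by $b$ iterated gives $a\le b^k a$ for all $k$; picking $k$ with $b^k\in K$ and using that $K$ is an ideal, $a\le b^ka\in K$, so $a\in(K]=K$, and (ii) gives $a\le a^2v$ with $v\in K\subseteq S$ (right regularity of $b$ is not even needed here).

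For sufficiency, assume both conditions. Taking $b=a$ in the first condition, for each $a$ there are $n$ and $s$ with $a^n\le a^{2n}sa=(a^n)^2(sa)$, so $a^n\in\mathbf{R}Reg_\leq(S)$; in particular $\mathbf{R}Reg_\leq(S)\neq\emptyset$. An easy induction shows that $e\le e^2x$ implies $e\le e^{k+1}x^k$ for all $k$, so that $e\in(e^kS^1]$ for every $k$ and every power of $e$ again lies in $\mathbf{R}Reg_\leq(S)$. Next, every $e\in\mathbf{R}Reg_\leq(S)$ lies in every ideal $I$ of $S$: choosing $c\in I$ and $m$ with $e^m\le e^{2m}sc$, we have $e^{2m}sc\in SI\subseteq I$, so $e^m\in(I]=I$, and then $e\le e^m\cdot(\text{element of }S^1)\in I$ gives $e\in I$. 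Hence $K:=K(S)$ exists, contains $\mathbf{R}Reg_\leq(S)$, is simple, and---since every element of $S$ has a power in $\mathbf{R}Reg_\leq(S)\subseteq K$---$S$ is a nil extension of $K$ by the nil-extension lemma.

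It remains to show $K$ is left simple and right regular. For right regularity, given $c\in K$ I want $b\in\mathbf{R}Reg_\leq(S)$ with $c\le bc$, because then the second condition yields $c\le c^2z$; once left simplicity of $K$ is in hand such a $b$ arises from $c\le uc^2=(uc)c$ (with $uc\in K$) by iterating to $c\le(uc)^kc$ and choosing $k$ with $(uc)^k\in\mathbf{R}Reg_\leq(S)$. For left simplicity, the first condition shows that a left ideal $L$ of $K$ contains a power of each element of $K$: if $c\in L$ and $d\in K$ then $d^m\le d^{2m}sc\in Kc\subseteq L$, so $d^m\in L$. The remaining point is to upgrade ``a power of $d$ lies in $L$'' to ``$d\in L$''. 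I expect this step, together with the apparent circular dependence between left simplicity and right regularity of $K$, to be the main obstacle: the two properties must be bootstrapped together, starting from the elements of $\mathbf{R}Reg_\leq(S)$ (where right regularity is automatic and powers stay in $\mathbf{R}Reg_\leq(S)$) and propagating through the whole kernel. Once $K$ is shown to be left simple and right regular, $S$ is displayed as a nil extension of such an ordered semigroup, completing the proof.
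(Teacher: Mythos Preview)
Your necessity argument is fine and matches the paper's. The gap is exactly where you locate it, in the sufficiency direction, and it is not a minor bootstrapping issue: with $K$ taken to be the abstract kernel $K(S)$ you only know $\mathbf{R}Reg_\leq(S)\subseteq K$, and for an arbitrary $c\in K$ there is no route to $c\le c^2x$ without already having left simplicity, while your argument for left simplicity needs right regularity to pass from $d^m\in L$ to $d\in L$. The circularity is real and does not resolve itself.

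The paper breaks the circle by choosing the candidate ideal differently: it sets $T=\mathbf{R}Reg_\leq(S)$ and proves directly that $T$ is an ideal of $S$. Right regularity of $T$ is then automatic, and left simplicity follows easily from condition~(i) once $TS\subseteq T$ is known. The substantive work is showing $ST\subseteq T$, $TS\subseteq T$, and $(T]\subseteq T$. For $sa$ with $a\in T$, $s\in S$: from $a\le a^2y$ one gets $a\le a^{m+1}y^m$ for all $m$; applying condition~(i) to the pair $(y,sa)$ gives $y^{m_1}\le y^{2m_1}t_1(sa)$, whence $sa\le (sat)(sa)$ with $t=a^{m_1}y^{2m_1}t_1$. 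Iterating and multiplying by $t$ yields $sat\le (sat)^2(sat)$, so $sat\in T$; now condition~(ii) applied to $sa\le (sat)(sa)$ gives $sa\in T$. The case $as$ is similar, and $(T]\subseteq T$ then follows by combining $a\le b\le b^{m+1}u^m$ with condition~(i) applied to $(u,a)$ and the already-established $TS\subseteq T$, followed by condition~(ii). Once $T$ is an ideal, your own observation that every element has a power in $T$ gives the nil extension, and left simplicity is immediate from condition~(i) together with $TS\subseteq T$.

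So the missing idea is simply: do not pass through $K(S)$; take $\mathbf{R}Reg_\leq(S)$ itself as the ideal and prove the ideal axioms for it.
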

\begin{proof}
Suppose  $S$ is a nil extension of a left simple and right regular
ordered  semigroup $K$. Let $a,b \in S$. Then there exists $\;m \in
\mathbb{N}$ such that $a^{m}, a^mb \in K$. Since $K$ is a right
regular ordered semigroup, there is $x \in K$ such that $a^{m} \leq
a^{2m} x$. The left simplicity of $K$ yields $x \leq t'a^{m}b$ for
some $t' \in K$. So $a^m \in (a^{2m} S b] $. Again let $b \in
\mathbf{R}Reg_\leq (S)$ and $a\in S$ such that $a \leq ba$. Since $b
\in \mathbf{R}Reg_\leq (S), \;there \;exists \;z \in K$, such that
$b\leq b^{n+1}(z)^{n}$, which holds for all $n\in\mathbb{N}$. So
there is  some $n_{1} \in \mathbb{N} \;such \;that \;z^{n_{1}} \in
K$. Since $K$ is an ideal of $S$, $b^{n_{1}+1}(z)^{n_{1}} \in K$.
Thus $b \in K$, so $ba\in K$ and hence  $a\in K$. Then the right
regularity of $K$ yields $a \in (a \in a^2S]$. Thus the condition is
necessary.

Conversely, let us assume that the given condition holds in $S$.
Choose $a\in S$ arbitrarily. Then by the given condition $\;there
\;exists \;m \in \mathbb{N} \;and \;y \in S$ such that $a^{m}\leq
a^{2m}yb$. Clearly $\mathbf{R}Reg_\leq (S)\neq \phi$. Say
$T=\mathbf{R}Reg_\leq(S)$. It is evident that for each $b\in S,
\;there \;exists \;k \in \mathbb{N}$ such that $b^{k}\in T$.
Consider  $s\in S$ and $a\in T$. Then there exists $y\in S\;such
\;that \;a \leq a^{2}y$. That is  $a \leq a^{3}y^{2} \leq
........................ \leq a^{n+1}(y)^{n},\; \;for \;all \;n\in
\mathbb{N}$. Then $sa \leq saa^{n}y^{n}, \;for \;all \;n \in
\mathbb{N} $.  From the given condition there are $m_1 \in
\mathbb{N} \;and \; t_1 \in S$ such that $y^{m_1} \leq y^{2m_1} t_1
sa$, whence  $sa\leq sa(a^{m}y^{2m_{1}}t_1) sa$. Set $t=
a^{m}y^{2m_{1}}t_1$, Then $sa \leq sat sa$. Now $sat \leq (sat)^2
sat$, which shows that $sat \in T$. Using the given second condition
to $sa \in S \;and \;sat \in T; \; sa  \leq sat  sa $ yields that
$sa \in ((sa)^2 S]$, that is, $sa \in T$.

In a similar approach it can be easily obtain that $as \leq a^{m_2
+1} z^{2m_2}t_2a as$ for some $m_2 \in \mathbb{N}$. Now since
$a^{m_2 +1} z^{2m_2}t_2 \in S$, by preceding conclusion $a^{m_2 +1}
z^{2m_2}t_2a \in S$. Hence it follows given condition that, $as \leq
(as)^2 a^{m_2 +1} z^{2m_2}t_2a as$, that is $as \in ((as)^2 S]$.
Therefore $sa, \;as \in T$. And so $T$ is a right regular
subsemigroup of $S$.

Consider $a\in S$ and $b\in T$ such that $a \leq b$. Then for some
$u \in \mathbb{N}, \;a\leq b^{n+1}u^{n}, \;for \;all \;n \in
\mathbb{N}$. Then there exists $m_{3}\in \mathbb{N}$  such that $a
\leq (b^{m_{3}+1}u^{m_{3}}a) a$. Since $b^{m_{3}+1}u^{m_{3}}a \in
T$, the given condition yields that $a \leq a^{2}t_{4}\;for \;some
\;t_{4}\in S$.

 Finally, choose $c,d \in T$. Then $c \leq c^{n+1}v^{n}
\;for \;all \;n \in \mathbb{N}$ and for some $v \in S$. Now for $v,
d \in S, \;there \;exists \;t_{5} \in S$ such that $v^{m_4}\leq
v^{2m_{4}}t_{5}d$ for some $m_{4}\in \mathbb{N}$. Since $c \in T, \;
c^{m_{4}}t_{5}\in T$ and so $c \in (T d]$. Hence $T$ is left simple.
Thus $T$ is left simple and right  regular ordered semigroup. Hence
$S$ is nil extension of a left simple and right  regular ordered
semigroup $T$.
\end{proof}

In \cite{Cao 2000} authors have characterized ordered semigroup
which are nil extensions of simple(order) semigroups. In the next
result we have described ordered semigroups which are nil extensions
of both simple and ordered regular semigroups.
\begin{Theorem}
An ordered semigroup $S$ is nil extensions of  simple and regular
ordered semigroup if and only if the following conditions are holds
\begin{itemize}
\item[(i)]for all $a,b \in S$, there exists $n \in \mathbb{N}$ such
that $a^{n} \in (a^{n}SbSa^{n}] $.
\item[(ii)] for  $a \in S$, and $b
\in Reg_\leq(S)$  such that   $a\leq ba$ implies that  $a
\in(aSbSa]$..
\item[(iii)] for  $a \in S$, and $b
\in Reg_\leq(S)$  such that   $a\leq ab$ implies that  $a \in
(aSbSa]$.
\item[(iv)] for  $a \in S$, and $b
\in Reg_\leq(S)$  such that   $a\leq b$ implies that  $a \in
Reg_\leq(S)$.
\end{itemize}
\end{Theorem}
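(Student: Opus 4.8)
The plan is to mirror the structure of the proof of Theorem 3.1, establishing necessity directly from the definitions and then, for sufficiency, constructing the kernel as $T = Reg_\leq(S)$ and verifying in turn that $T$ is simple, regular, an ideal, and that $S$ is a nil extension of $T$.

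\medskip

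\noindent\textbf{Necessity.} Suppose $S$ is a nil extension of a simple regular ordered semigroup $K$. For $a,b\in S$ pick $m$ with $a^m\in K$; regularity of $K$ gives $x\in K$ with $a^m\le a^m x a^m$, and simplicity of $K$ applied to $x$ (which lies in the single ideal $K$, so $K=(KxK]$ by Lemma 2.1) gives $x\le u b v$ for suitable $u,v\in K^1$; substituting yields $a^m\in(a^mSbSa^m]$, which is (i). For (ii)–(iii), if $b\in Reg_\leq(S)$ and $a\le ba$ (resp. $a\le ab$), then since $b\in Reg_\leq(S)$ some power $b^k$ lies in $K$ — this needs the analogue of the fact used in Theorem 3.1 that a power of a regular element falls in the kernel — and then $b\in K$ because $b\le b b' b$ with $b'$ chosen so that a power of the relevant product is in the ideal $K$, forcing $a\in K$ (since $ba\in K$ or $ab\in K$, and $a\le ba\le\dots$ iterated lands in $K$); regularity and simplicity of $K$ then give $a\in(aKbKa]\subseteq(aSbSa]$. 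For (iv), if $a\le b$ with $b\in Reg_\leq(S)$, again $b\in K$, hence $a\in K$ (as $(K]_S=K$ as an ideal), and $K$ regular gives $a\in Reg_\leq(S)$.

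\medskip

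\noindent\textbf{Sufficiency.} Assume (i)–(iv) hold. From (i) with $b=a$ one sees $\mathbf{R}Reg_\leq(S)\neq\phi$, and more carefully that every element has a power in $T:=Reg_\leq(S)$, so $T\neq\phi$. First I would show $T$ is an ideal: given $a\in T$, $s\in S$, I want $sa,as\in T$. Using $a\le axa$, iterate to get $a\le (ax)^n a$ for all $n$; then $sa\le s(ax)^n a$, and apply (i) to the pair $(x,\text{something})$ — more precisely use (i) to produce, for a suitable power, $x^{m}\in(x^{m}S(sa)Sx^{m}]$, substitute back to obtain $sa\le sa\cdot t\cdot sa$ for some $t\in S$; observing $sat\le (sat)^2$-type domination shows $sat\in Reg_\leq(S)$ after using that $sat$ satisfies $sat\le satsat$, hence (using (ii) with $b=sat$) $sa\in(saS(sat)Ssa]$, which exhibits a regularity witness for $sa$, so $sa\in T$; symmetrically with (iii) for $as$. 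Next, $T$ regular is immediate from the definition of $Reg_\leq(S)$. For simplicity of $T$: take $c,d\in T$; write $c\le c(xc)^n$ for all $n$ (from $c\le cxc$), use (i) to get $v^{m}\in(v^mSdSv^m]$ for the appropriate $v$, and since a power of $c$ times the relevant elements stays in $T$ (by the ideal property already proved), deduce $c\in(TdT]$, giving that the only ideal of $T$ is $T$ itself. Finally, every element of $S$ has a power in $T$ by the remark above, so by Lemma 2.5 $S$ is a nil extension of $T$; and $T$ is simple and regular.

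\medskip

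\noindent The main obstacle I anticipate is the bookkeeping in the ideal step — threading the domination inequalities $a\le(ax)^na$ through condition (i) to land on the self-similar form $sa\le sa\,t\,sa$, and then verifying that the auxiliary element ($sat$, or its analogue) genuinely lies in $Reg_\leq(S)$ so that (ii) and (iii) are applicable. Conditions (ii)–(iii) are exactly tailored to absorb the hypothesis ``$a\le ba$ (or $ab$) with $b$ regular'', so the real work is producing that hypothesis in the correct shape; once it is in place, each closure step is a short substitution. Condition (iv) is only needed to close $T$ downward in the necessity direction and to match the ``$a\le b$'' case, and plays no role in the harder sufficiency argument. I would also double-check the claim that a regular-in-order element has a power in the kernel in the necessity part, since that is used implicitly and parallels the use of $\mathbf{R}Reg_\leq$ in Theorem 3.1.
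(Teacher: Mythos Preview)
Your overall architecture---prove necessity directly, then for sufficiency set $T=Reg_\leq(S)$ and show it is a simple regular ideal into which every element eventually powers---is exactly the paper's. But there is a genuine gap.

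You claim that condition (iv) ``plays no role in the harder sufficiency argument.'' This is false, and it is the point where your sketch breaks. In an \emph{ordered} semigroup an ideal $I$ must satisfy $(I]=I$ as well as $SI\cup IS\subseteq I$. Your closure argument (and the paper's) establishes $sa,\,as\in T$ for $a\in T$, $s\in S$, but never that $T$ is downward closed. Condition (iv) is \emph{precisely} the statement $(T]=T$: if $a\leq b$ with $b\in Reg_\leq(S)=T$ then $a\in T$. Without invoking (iv) you have not shown $T$ is an ideal of the ordered semigroup $S$, so Lemma~2.6 does not apply and the nil-extension conclusion is unjustified. The paper explicitly appeals to (iv) at this step.

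A secondary point: your route to $sa\in T$---first extract $sa\leq sa\cdot t\cdot sa$ from condition (i) by substituting a power estimate, then notice $sat$ is regular and feed this into (ii)---is more circuitous than necessary, and as written the substitution does not close up to the symmetric form (you get $sa\leq sa\,w_1\,sa\,w_2$ with an unabsorbed right tail). The paper avoids this entirely: from $a\leq aha$ one has $sa\leq sa(ha)^n$ for every $n$; since every element of $S$ has a regular power, some $(ha)^{m_1}\in Reg_\leq(S)$; then condition (iii) applied directly to $sa\leq sa\cdot(ha)^{m_1}$ yields $sa\in(sa\,S\,(ha)^{m_1}\,S\,sa]\subseteq(sa\,S\,sa]$, so $sa\in T$ in one stroke. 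The dual via (ii) handles $as$. You are, in effect, re-running the machinery of Theorem~3.1 when the hypotheses (ii)--(iii) of the present theorem are already shaped to shortcut it.
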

\begin{proof}
Suppose that $S$ is  a nil extension of a simple and regular ordered
semigroup $K$.

$(i)$ Choose $a,b \in S$. Then there exists $m \in \mathbb{N}$ such
that $a^{m},a^{m}b \in K$. Since $K$ is regular ordered semigroup,
there is some $x \in K$ such that $a^{m} \leq a^{m}xa^{m} \leq
a^{m}(xa^{m})xa^{m}$, whence, by the simplicity of $K$ there is\\
$y\in K$ such that $a^{m}\leq a^{m}(ya^{m})bxa^{m}\in
(a^{m}SbSa^{m}]$.

$(ii)$ Let $b \in Reg_\leq (S)$ and $a \in S$ such that $a \leq ba$.
Since $b \in Reg_\leq(S), \;there \;exists \\\;z \in S$ such that $b
\leq b(zb)^{n} \;for \;all \;n \in \mathbb{N}$. Then for some $n_{1}
\in \mathbb{N}$ such that $(zb)^{n_{1}} \in K$. This gives
$b(zb)^{n_{1}} \in K$, which gives that $b \in K$ and so $ba \in K$
and finally $a\in K$. Since $K$ is a simple and regular ordered
semigroup, for $a,b\in K, \;a \in (aKbKa]\subseteq (aSbSa]$.
\\$(iii)$  and $(iv) $ follows similarly.

Thus the given conditions are obtained.

Conversely, let us assume that given condition holds in $S$. Choose
$a\in S$  arbitrarily. By the given condition, there is
$m\in\mathbb{N}$ and some $x \in S$ such that $a^{m}\leq
a^{m}xa^{m}$. So $Reg_\leq(S) \neq \phi$. Say $T=Reg_\leq(S)$. It is
evident that, for every $a\in S, \;there \;exists \;m \in
\mathbb{N}$ such that $a^{m}\in T$. Consider  $s\in S$ and $a\in T$.
Then for some $h\in S, \;a \leq aha$, whence $sa \leq
sa(ha)^{n},\;and \;as \leq (ah)^n as \;for \;all \;n \in
\mathbb{N}$. Then there is $m_1 \in \mathbb{N}$ such that
$(ha)^{m_1} \in T$. So using the condition(ii) to $sa \leq sa
(ha)^{m_1}$ yields that $sa \in ((sa) S (ha)^{m_1} S sa]$, that is
$sa \in T$. On the otherhand $as \in T$ follows similarly by using
condition(ii). Hence by condition(iv) $T$ is an ideal of $S$. Also
by the given condition(iv) it can be easily seen that $T$ is simple
in $S$. Hence $S$ is nil extension of a simple and  regular ordered
semigroup $T$.
\end{proof}
\begin{Theorem}
An ordered semigroup $S$ is nil extensions of completely
regular ordered semigroup if and only if S is completely $\pi$-regular and
for all $a\in S, \;b\in Reg_\leq(S)$ such that  $a \leq ba$ and $a
\leq ab$, both separately implies $a\leq (a^{2}SbSa^{2}] \;for
\;some \;x\in S $.
\end{Theorem}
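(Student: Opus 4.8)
The plan is to argue in two directions, following the template of Theorems~3.1 and~3.2.

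\emph{Necessity.} Suppose $S$ is a nil extension of a completely regular ordered semigroup $K$. First I would check that $S$ is completely $\pi$-regular: for $a\in S$ there is, by the characterisation of nil extensions, some $m$ with $a^m\in K$, and the complete regularity of $K$ gives $a^m\in((a^m)^2K(a^m)^2]\subseteq(a^{2m}Sa^{2m}]$, so a power of $a$ is a completely regular element of $S$. For the divisibility condition take $a\in S$ and $b\in Reg_\leq(S)$ with $a\leq ba$. As in the proof of Theorem~3.2 I would first force $b$ into $K$: picking $z$ with $b\leq bzb$ gives $b\leq b(zb)^n$ for all $n$, and since some $(zb)^{n_1}\in K$ while $K$ is an ideal, $b\leq b(zb)^{n_1}\in K=(K]$ forces $b\in K$; then $ba\in K$ and $a\leq ba$ give $a\in K$. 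Now complete regularity of $K$ supplies $x\in K$ with $a\leq a^2xa^2$, while $a\leq ba$ iterates to $a\leq b^2a$, whence $a^2\leq b^2a^2$ and therefore $a\leq a^2xa^2\leq a^2xb^2a^2=a^2\cdot x\cdot b\cdot b\cdot a^2\in(a^2SbSa^2]$. The case $a\leq ab$ is symmetric: $a\leq ab$ iterates to $a\leq ab^2$, so $a^2\leq a^2b^2$ and $a\leq a^2xa^2\leq a^2b^2xa^2=a^2\cdot b\cdot b\cdot x\cdot a^2\in(a^2SbSa^2]$.

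\emph{Sufficiency.} Assume $S$ is completely $\pi$-regular and the divisibility condition holds; I would take $T=Reg_\leq(S)$ as the candidate kernel. The first step is to show that, under these hypotheses, every $a\in T$ is in fact completely regular: if $a\leq aha$ then $ha\leq h(aha)=(ha)^2\leq(ha)^3$, so $ha\in Reg_\leq(S)$, and applying the part of the condition dealing with $a\leq ab$ to the inequality $a\leq a(ha)$ gives $a\in(a^2S(ha)Sa^2]\subseteq(a^2Sa^2]$. The same device then yields $ST\subseteq T$ and $TS\subseteq T$: given $a\in T$ and $s\in S$, complete regularity of $a$ provides $x$ with $a\leq a^2xa^2$, i.e. $a\leq a(axa^2)$ and $a\leq(a^2xa)a$, where $axa^2,\,a^2xa\in Reg_\leq(S)$ by the previous step; hence $sa\leq(sa)(axa^2)$ and $as\leq(a^2xa)(as)$, and the condition forces $sa\in((sa)^2S(sa)^2]$ and $as\in((as)^2S(as)^2]$, so $sa,as\in T$. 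Since, by complete $\pi$-regularity, a power of every element of $S$ is completely regular and hence lies in $T$, we get that $T\neq\emptyset$ and $S/T$ is nil; so once $T$ is shown to be an ideal, $S$ is a nil extension of the completely regular ordered semigroup $T$.

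The step I expect to be the genuine obstacle is verifying $(T]=T$, i.e. that $Reg_\leq(S)$ is downward closed, which is exactly what makes $T$ an ideal. In Theorem~3.2 this is delivered by the separate hypothesis (iv), namely that $a\leq b$ with $b\in Reg_\leq(S)$ implies $a\in Reg_\leq(S)$; here I would try to deduce it from complete $\pi$-regularity together with the divisibility condition, starting from $t\leq a$ with $a\in T$ (so $a\leq a^2xa^2$ and $t^2\leq a^2$) and examining a suitable power of $t$. Should that fail, the statement should be read as implicitly carrying the same downward-closure clause as Theorem~3.2. The remaining points are routine but need a little care: $a\leq ba$ (respectively $a\leq ab$) must be iterated \emph{twice}, to $a\leq b^2a$ (respectively $a\leq ab^2$), so that the final word genuinely has the form $a^2\cdot s\cdot b\cdot t\cdot a^2$ with $s,t\in S$ instead of $b$ abutting an $a^2$; and wherever a completely regular witness is needed inside $T$, one invokes the lemma of Section~2 on regular elements to relocate it.
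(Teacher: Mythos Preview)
Your necessity argument is cleaner than the paper's. Where you simply iterate $a\leq ba$ to $a\leq b^2a$, square, and substitute into $a\leq a^2xa^2$ to land in $(a^2SbSa^2]$, the paper instead invokes an external structure theorem (that a completely regular ordered semigroup is a complete semilattice of completely simple ordered semigroups, with $\jc$ the semilattice congruence) to conclude $(a)_{\jc}=(ba)_{\jc}$ and hence $a\in(a^2KbKa^2]$. Your route avoids that dependency entirely and is preferable.

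For sufficiency the overall shape matches the paper's: the paper takes $T=Gr_\leq(S)$ while you take $T=Reg_\leq(S)$, and your observation that $ha\in Reg_\leq(S)$ together with the $a\leq ab$ clause forces every regular element to be completely regular shows the two choices coincide. There is, however, a small slip in your closure argument: you assert $axa^2,\,a^2xa\in Reg_\leq(S)$ ``by the previous step'', but that step only upgrades regular elements to completely regular ones --- it does not manufacture regularity for $axa^2$. The paper's fix (somewhat buried in its chain $a\leq a^{n-1}(a^2h)^na^2$) is the one you should use: from $a\leq a(axa^2)$ iterate to $a\leq a(axa^2)^n$ for all $n$, then invoke completely $\pi$-regularity to get some $(axa^2)^m\in Gr_\leq(S)\subseteq Reg_\leq(S)$, and only then apply the hypothesis to $sa\leq(sa)(axa^2)^m$.

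On the downward-closure step $(T]=T$ you are right to flag it as the genuine obstacle. The paper does not escape it either: at the corresponding point it simply writes ``consider $a\in S$ and $b\in T$ such that $a\leq b$; then by the given condition $a\leq at_1bt_2a$'', which is an assertion, not a derivation --- the stated hypothesis only fires under $a\leq ba$ or $a\leq ab$, not under $a\leq b$. So your diagnosis is accurate: either an additional argument is needed here, or the theorem tacitly carries a clause analogous to condition~(iv) of Theorem~3.2.
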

\begin{proof}
Suppose that $S$ is a nil extension of an completely regular
ordered semigroup $K$ and $a,b \in S$. Then  there is $ \;m \in
\mathbb{N}$ such that $a^{m}\in K$. By the complete regularity of
$K$,it is possible to get some  $m\in \mathbb{N}$ and $x\in K$ such
that $a^{m} \leq a^{2m}xa^{2m} \leq a^{m+1}xa^{m+1}$ Thus $S$ is
completely $\pi$-regular.

Again let $b \in Reg(S)$ and $a \in S$ such that $a \leq ba$ and $a
\leq ab$. Since $b \in Reg(S), \;there \;is \;z\in S$ such that
$b\leq b(zb)^{n} \;for \;all \;n \in \mathbb{N}$. Then there is
$n_{1}\in \mathbb{N}$ such that $(zb)^{n_{1}} \in K$. That is
$b(zb)^{n_{1}} \in K$, which gives that $b\in K$ and so $ba\in K$
and finally $a\in K$. Since $K$ is an ordered completely regular
ordered semigroup, by theorem ()\cite{bh1}, it follows that $K$ is
complete semilattice of ordered completely simple semigroups and
$\jc$ is a complete semilattice congruence on $K$. So for $a, ba \in
K \;such \;that  \; a \leq ba \;yields
 \;(a)_{\jc}=(ba)_{\jc}$.
Thus  $a\in (a^{2}KbKa^{2}] \subseteq (a^{2}SbSa^{2}]$. Thus the
given conditions are obtained.

Conversely, let us assume that given condition holds in $S$. Let
$a\in S$ be arbitrary. Since $S$ is completely $\pi$-regular, there
exists $m\in\mathbb{N}$ such that $a^{m}\leq a^{m+1}xa^{m+1}$, for
some $x\in S$. So $Gr_\leq(S)\neq \phi$. Say $T=Gr(S)$. Also it is
clear that, for each $a \in S, \;there \;exists \;m\in \mathbb{N}$
such that $a^{m}\in T$. Now choose $s\in S$ and $a\in T$. Then by
lemma(), there exists $h \in S\;such \;that \;a \leq a^{2}ha^{2}
\leq a(ah)a^{2} \leq a(a^{2}h)^{2}a^{2} \leq a^{2}(a^{2}h)^{3}a^{2}
\leq a^{n-1}(a^{2}h)^{n}a^{2},\; \;for \;all \;n \in \mathbb{N}$.
Now $\exists m_{1}\in \mathbb{N}$ such that
$a^{n-2}(a^{2}x)^{n}a^{2}\in T$. Again $sa \leq
sa^{n-1}(a^{2}h)^{n}a^{2} =    saa^{n-2}(a^{2}h)^{n}a^{2},\;for
\;all \;n  \in \mathbb{N}$. Then using the given condition and the
preceding observation $sa \leq saa^{n-2}(a^{2}h)^{n}a^{2} \leq satsa
\;where  \;t=  s'a^{n-2}(a^{2}h)^{m_{1}}a^{2}t'\in S \; and  \;for
\;some\;s',t'\in S$. Now consider $a \in S$ and $b\in T$ such that
$a\leq b$. Then by the given condition, $a \leq at_{1}bt_{2}a \;for
\;some \;t_{1}, t_{2}\in S$. Also $T$ is ordered completely regular
semigroup such that $a^{m}\in T \;for \;all \;a \in S$ and for some
$m \in \mathbb{N}$. Hence $S$ is nil extension of a
completely regular ordered semigroup $T$.
\end{proof}
\section{complete semilattice of nil extensions simple ordered regular semigroups:}
\begin{Theorem}
Let $S$ be an ordered semigroup. Then the following conditions are
equivalent on $S$:
\begin{itemize}
\item[(i)] $S$ is complete semilattice  of nil extensions simple
ordered  regular semigroups;
\item[(ii)]  for all $a, b \in S$ there is $n \in \mathbb{N}$ such
that $(ab)^n \in ((ab)^n S a^2 S (ab)^n]$;
\item[(iii)] $S$ is $\pi-$regular, and is  complete semilattice of
Archimedean semigroups;
\item[(iv)] $S$ is $\pi-$regular, and for all $a \in S$,  $e \in E_{\leq}(S)$, $a \mid e$ implies $a^2 \mid
e$;
\item[(v)] $S$ is $\pi-$regular, and for all $a, b \in S$, and  $e \in E_{\leq}(S)$, $a \mid e \;and \;b \mid e$ implies $ab \mid
e$;
\item[(vi)] $S$ is $\pi-$regular, and every $\jc-$class of $S$
containing an ordered idempotent is a subsemigroup of $S$;
\item[(vii)]$S$ is intra $\pi-$regular and every $\jc-$class of $S$
containing an intra-regular element  is a regular subsemigroup of
$S$;
\item[(viii)]$S$ is complete semilattice  of nil extensions simple
semigroups and $Intra_\leq(S)= Reg_\leq (S)$.
\end{itemize}
\end{Theorem}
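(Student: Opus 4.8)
The plan is to establish the eight conditions equivalent by a single cycle of implications, $(i)\Rightarrow(ii)\Rightarrow(iii)\Rightarrow(iv)\Rightarrow(v)\Rightarrow(vi)\Rightarrow(vii)\Rightarrow(viii)\Rightarrow(i)$, where the step $(iv)\Rightarrow(v)$ is routed through $(iv)\Rightarrow(iii)\Rightarrow(v)$ (with $(iv)\Rightarrow(iii)$ just repeating the proof of $(ii)\Rightarrow(iii)$), and $(v)\Rightarrow(iv)$ is the trivial case $b=a$. The recurring ingredients are: Cao's Lemma that $S$ is a nil extension of an ideal $I$ precisely when every element of $S$ has a power in $I$, together with the fact that a simple ideal of an ordered semigroup is its kernel; the complete semilattice apparatus of Section~2 (a complete semilattice congruence $\rho$, its semilattice $Y$ with order $\preceq$, and clause (4) of that definition); the theorem of Section~3 on nil extensions of simple regular ordered semigroups, in particular the absorption $u\in(uKvKu]$ valid whenever $K$ is simple regular and $u,v\in K$; and the elementary remark that in a complete semilattice $S=\bigcup_{\alpha\in Y}S_{\alpha}$ one has $a\mid b$ iff $\rho(a)\rho(b)=\rho(b)$, so that divisibility is just the order of $Y$ carried over to $S$.

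For $(i)\Rightarrow(ii)$, write $S=\bigcup_{\alpha\in Y}S_{\alpha}$ with each $S_{\alpha}$ a nil extension of a simple regular ordered semigroup $K_{\alpha}$, fix $a,b\in S$, and set $\alpha=\rho(ab)$. Since $Y$ is a semilattice, $\rho((ab)^{n})=\alpha$ and $\rho(a)\alpha=\alpha$, so $a^{2}(ab)^{n}\in S_{\alpha}$, while for all large $n$ we have $(ab)^{n}\in K_{\alpha}$; as $K_{\alpha}$ is an ideal of $S_{\alpha}$, $c:=(ab)^{n}\cdot(a^{2}(ab)^{n})=(ab)^{n}a^{2}(ab)^{n}\in K_{\alpha}$. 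Applying the absorption identity in $K_{\alpha}$ with $u=(ab)^{n}$ and $v=c$ gives $(ab)^{n}\leq(ab)^{n}pcq(ab)^{n}$ for some $p,q\in K_{\alpha}$; substituting $c=(ab)^{n}a^{2}(ab)^{n}$ and regrouping the right side as $(ab)^{n}\cdot(p(ab)^{n})\cdot a^{2}\cdot((ab)^{n}q(ab)^{n})$ yields $(ab)^{n}\in((ab)^{n}Sa^{2}S(ab)^{n}]$, i.e.\ (ii).

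The remaining implications go as follows. Putting $b=a$ in (ii) gives $a^{2n}\leq a^{2n}(sa^{2}t)a^{2n}$, so $a^{2n}\in Reg_{\leq}(S)$ and $S$ is $\pi$-regular, which settles the $\pi$-regularity clause of (iii)--(vi). For the ``semilattice of Archimedean'' clause of (iii) I would use the relation $\sigma$ with $a\,\sigma\,b$ iff $a\mid b^{k}$ and $b\mid a^{\ell}$ for some $k,\ell\in\mathbb{N}$: here $a\,\sigma\,a^{2}$, $ab\,\sigma\,ba$ and (since $a\leq b$ forces $a^{2}\leq ab$, hence $ab\mid a^{2}$) $a\leq b\Rightarrow(a,ab)\in\sigma$ are immediate, so the real content --- transitivity of $\sigma$ and its being a congruence --- is exactly what (ii) is built to deliver, each $\sigma$-class being Archimedean by construction. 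Once (iii) is in hand, the dictionary $a\mid b\iff\rho(a)\rho(b)=\rho(b)$ makes (iv) and (v) formal: $a\mid e$ with $e\in E_{\leq}(S)$ says $\rho(e)\preceq\rho(a)=\rho(a^{2})$, i.e.\ $a^{2}\mid e$, and $a\mid e,\ b\mid e$ give $\rho(e)\preceq\rho(a)\rho(b)=\rho(ab)$, i.e.\ $ab\mid e$; while $(v)\Rightarrow(vi)$ is the observation that if $c,d$ lie in the $\jc$-class of an idempotent $e$ then $cd\mid e$ by (v) and $e\mid cd$ (from $c\leq xey$, so $cd\leq xeyd$), so $cd\,\jc\,e$ and that class is a subsemigroup. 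For $(vi)\Rightarrow(vii)$: taking $a^{n}\leq a^{n}ya^{n}$ by $\pi$-regularity, $f:=a^{n}y\in E_{\leq}(S)$ with $a^{n}\,\jc\,f$, so by (vi) $a^{2n}=a^{n}a^{n}$ lies in the $\jc$-class of $f$, whence $a^{n}\,\jc\,a^{2n}$, $a^{n}\in(Sa^{2n}S]$, $S$ is intra $\pi$-regular, and every $\jc$-class meeting $Intra_{\leq}(S)$ is one meeting $E_{\leq}(S)$, hence a subsemigroup and (after one more sandwich computation) a regular one. For $(vii)\Rightarrow(viii)$ one extracts from (vii) a complete semilattice congruence whose classes are nil extensions of simple ordered semigroups, together with $Intra_{\leq}(S)=Reg_{\leq}(S)$. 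Finally $(viii)\Rightarrow(i)$: if $S=\bigcup_{\alpha}S_{\alpha}$ with $S_{\alpha}$ a nil extension of the simple $K_{\alpha}$, then every $x\in K_{\alpha}$ is intra-regular (simplicity of $K_{\alpha}$ yields $x\leq px^{2}q$), hence regular by (viii), and simplicity of $K_{\alpha}$ together with its being the kernel of $S_{\alpha}$ lets the regularity witness be chosen inside $K_{\alpha}$; so each $K_{\alpha}$ is simple regular and $S$ is a complete semilattice of nil extensions of simple regular ordered semigroups.

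The weight of the proof lies in two spots. First, showing that condition (ii) --- and likewise (iv) --- forces $\sigma$ to be a transitive congruence: this is the classical delicate point of every ``semilattice of Archimedean'' decomposition, and one must verify that the precise shape of (ii), a two-sided sandwich by $(ab)^{n}$ with $a^{2}$ (rather than $a$) in the middle, is exactly strong enough, the extra power and the outer $(ab)^{n}$'s being what afterwards force the components' kernels to be \emph{regular} rather than merely simple. Second, the fact invoked in both $(vi)\Rightarrow(vii)$ and $(viii)\Rightarrow(i)$ that a $\pi$-regular Archimedean ordered semigroup --- equivalently, the kernel of such a component --- is a nil extension of a \emph{regular} simple ordered semigroup; I would isolate this as a separate lemma and prove it from the presence of an ordered idempotent in the kernel (supplied by $\pi$-regularity) together with simplicity, retracing the sandwich computations already seen in the theorems of Section~3.
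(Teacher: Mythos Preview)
Your overall scheme differs from the paper's in one structural respect: you aim for a single cycle $(i)\Rightarrow\cdots\Rightarrow(viii)\Rightarrow(i)$, whereas the paper radiates out of $(i)$, proving $(i)\Rightarrow(ii)\Rightarrow(iii)\Rightarrow(iv)\Rightarrow(v)\Rightarrow(vi)$ (the middle steps by citing Kehayopulu--Tsingelis and an internal lemma), then $(i)\Rightarrow(vii)$, $(i)\Rightarrow(viii)$, and $(viii)\Rightarrow(i)$ each directly. Where the routes overlap --- $(i)\Rightarrow(ii)$, $(v)\Rightarrow(vi)$, $(viii)\Rightarrow(i)$ --- your arguments agree with the paper's.

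There are two genuine weak points in your cycle. First, in $(vi)\Rightarrow(vii)$ you assert that the $\jc$-class $J$ containing an intra-regular element is ``(after one more sandwich computation) a regular one''; but nothing you have written produces an inequality $b\leq b\,t\,b$ with $b$ on \emph{both} ends. From $(vi)$ you get that $J$ is a subsemigroup and hence that every $b\in J$ is intra-regular ($b\jc b^{2}$), and $\pi$-regularity gives $b^{m}\leq b^{m}wb^{m}$, but combining these via $b\leq u\,b^{m}\,v$ leaves the outer factors $u,v$ outside $b$. One way to close this is to first derive $(iii)$ from $(vi)$: with $c=b(ab)^{m}$ one checks $ac=(ab)^{m+1}$ and $ca=(ba)^{m+1}$ both lie in the $\jc$-class of $(ab)^{m}$, so $ca\cdot ac=ca^{2}c$ lies there too, forcing $(ab)^{m}\in(Sa^{2}S]$; this is the Kehayopulu--Tsingelis condition for a complete semilattice of Archimedean components, and from $(iii)$ the regular kernel then comes for free via $(i)$. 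The paper sidesteps all of this by proving $(i)\Rightarrow(vii)$ directly, locating the intra-regular element inside the simple regular kernel $K_{\alpha}$ and reading off regularity there. Second, your $(vii)\Rightarrow(viii)$ is only a sentence (``one extracts from (vii) a complete semilattice congruence\dots''); you do not say which congruence, nor why its classes are Archimedean. The paper again bypasses this by doing $(i)\Rightarrow(viii)$ with an explicit computation inside $K_{\alpha}$.

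A smaller slip: your ``dictionary'' $a\mid b\Longleftrightarrow\rho(a)\rho(b)=\rho(b)$ is false as a biconditional --- only the forward direction holds for arbitrary $b$ in a complete semilattice of Archimedean components. The reverse direction, which is what you actually use in $(iii)\Rightarrow(iv),(v)$, needs the extra step $e\leq e^{n}$ available because $e\in E_{\leq}(S)$: from $\rho(e)\preceq\rho(a^{2})$ one gets $a^{2}e\in S_{\rho(e)}$, the Archimedean property gives $e^{n}\leq x\,a^{2}e\,y$, and then $e\leq e^{n}$ finishes. State it that way and the step is fine.
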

\begin{proof}
$(i)\Rightarrow (ii)$: Suppose that $S$ is complete semilattice $Y$
of semigroups $S_\alpha(\alpha \in Y)$, where each  $S_\alpha$ is
nil extensions of a simple ordered regular semigroup $K_\alpha$. Let
$\rho$ be the corresponding complete semilattice congruence on $S$.
Let $a, b \in S$. Then there are $\alpha, \beta \in Y$ such that
$a\in S_\alpha$ and $b \in S_\beta$, so that there is $\gamma \in Y$
such that $ab, a^2b \in S_\gamma$. Let $S_\gamma$ be the nil
extensions of $K_\gamma$. Then there are $m, n \in \mathbb{N}$ such
that $(ab)^m, (a^2b)^n \in K_\gamma$. Since $K_\gamma$ is ordered
regular, $(ab)^m \leq (ab)^m z (ab)^m$ for some $z \in K_\gamma$.
Then for $z, (a^2b)^n \in K_\gamma$, the simplicity of $K_\gamma$
yields that $z \in (K_\gamma (a^2b)^n K_\gamma]$. Hence $(ab)^m \in
((ab)^m S a^2 S(ab)^m]$.

$(ii)\Rightarrow (iii)$ and  $(iii)\Rightarrow(iv)$: These  follow
from Theorem 2.8 \cite{ke1}

 $(iv)\Rightarrow (v)$: This  follows from
Lemma 2.6.

$(v)\Rightarrow (vi)$: Consider a $\jc-$class $J$ of $S$ that
contains an ordered idempotent $e \in E_\leq (S)$. Choose $a, b \in
J$ arbitrarily. Then there are $x, y, z, w \in S^1$ such that $e
\leq xa y$ and $e \leq zbw$. Now it  is obvious that $ab \in (SeS]$.
Moreover  by the given condition, $ab \mid e$. Thus $ab \jc e$.
Hence $J$ is subsemigroup of $S$.

$(i)\Rightarrow (vii)$: Suppose that $S$ is complete semilattice $Y$
of semigroups $S_\alpha(\alpha \in Y)$, where each  $S_\alpha$ is
nil extensions of simple ordered regular semigroup $K_\alpha$.
Moreover let $\rho$ be the corresponding complete semilattice
congruence on $S$. Consider $a \in S$ then there are $\alpha \in Y$
and $m \in \mathbb{N}$ such that $a^m \in K_\alpha$, where $S_\alpha
$ is nil extensions of a simple ordered regular semigroup
$K_\alpha$. So by the ordered regularity of $K_\alpha$ there is $x
\in K_\alpha$ such that $a^m \leq a^mxa^m$. Since $x, a^{2m} \in
K_\alpha$; the simplicity of $K_\alpha$ yields that $a^m \leq a^m y
a^{2m} z a^m$ for some $y,z \in K_\alpha$. This shows that $S$ is
intra $\pi-$regular.

Next consider a $\jc-$class $J \;of \;S$ that contains a
intra-regular element $a$. Then $a \leq xa^2y $ for some $x, y \in
S^1$ and there is some $\alpha \in Y$ such that $a \in S_\alpha$.
 Now $a \leq xa^2y \leq xa x a^2 y^2$ implies that
$(a)_\rho =(xay)_\rho= (xa)_\rho$, so $xa \in S_\alpha$. Likewise,
$ay \in S_\alpha$. It is evident that $a \leq (xa)^n ay^n \;for
\;all \;n \in \mathbb{N}$. For $xa \in S_\alpha$ there is $m' \in
\mathbb{N}$ such that $(xa)^{m'} \in K_\alpha$, so $(xa)^{m'}
ay^{m'} \in K_\alpha$, that is, $a \in K_\alpha$.

Now it  is obvious  that $a \jc a^2$. Choose $b, c \in J$. Since $a
\jc b \;and \;a \jc c$ there are $z_1, w_1, x_1, y_1 \in S^1$ such
that $a \leq x_1 b y_1$ and $a \leq z_1 c w_1$. This implies that
$a^2 \leq z_1 (c w_1x_1b) y_1$, which together with the completeness
of $\rho $ gives that $(a)_\rho=(a^2)_\rho =( az_1 (c w_1x_1b)
y_1)_\rho= ( az_1 (c w_1x_1b)^m y_1)_\rho$ for all $m \in
\mathbb{N}$. Then for all $m \in \mathbb{N}, \;az_1 (c w_1x_1b)^m
y_1 \in S_\alpha $. Since $S_\alpha$ is nil extension of $K_\alpha$
for  some $n' \in \mathbb{N}$ we have that $(az_1 (c w_1x_1b)^m
y_1)^{n'} \in K_\alpha$. Then for $a, (az_1 (c w_1x_1b)^m y_1)^{n'}
\in K_\alpha$ the simplicity of $K_\alpha $ yields that $a \leq t_1
(az_1 (c w_1x_1b)^m y_1)^{n'} t_2$ for some $t_1, t_2 \in K_\alpha$.
This shows that $a \leq s_1(c w_1x_1b)^m s_2$ for some $s_1, s_2 \in
S$ and for all $m \in \mathbb{N}$. Since $ S$ is intra
$\pi-$regular, there are $u_1, u_2 \in S$ and $m \in \mathbb{N}$
such that $a \leq s_1 u_1 (c w_1x_1b)^{2m} u_2 s_2$, that is, $a
\leq s_1 u_1  (c w_1x_1b)^2(c w_1x_1b)^{2m-2} u_2 s_2 \leq s_1 u_1 c
w_1x_1(bc) w_1x_1b (c w_1x_1b)^{2m-2} u_2 s_2 \leq z_1 bc z_2$,
where $z_1= s_1 u_1  c w_1x_1$ and $z_2= w_1x_1b (c w_1x_1b)^{2m-2}
u_2 s_2$. Also, it is evident that $bc \in (S^1aS^1]$. Thus  $a \jc
bc$, which  shows that $J$ is subsemigroup of $S$. So what we have
seen is that every element of $J$ is intra-regular. Also we have
seen earlier that, $a$ being an intra-regular element $a \in
K_\alpha$. This clearly shows that $J= K_\alpha$. Hence every $\jc-$
class of $S $ containing an intra-regular element is an ordered
regular subsemigroup of $S$.

$(i)\Rightarrow (iii)$:  Let $S$ be complete semilattice $Y$ of
semigroups $\{S_\alpha \}_{\alpha \in Y}$ and $\rho$ be the
corresponding complete semilattice congruence on $S$.  Choose $a, b
\in S_\alpha$ such that $a \mid b$. Then there are $x, y \in S^1$
such that $b \leq xay$. Now there is $n, m  \in \mathbb{N}$ such
that $a^n, b^m \in K_\alpha$, where $S_\alpha$  is nil extension of
simple regular semigroup $K_\alpha$. Since $a^n, b^m \in K_\alpha$,
the simplicity of $K_\alpha$ implies that $b^m \leq z_1 a^n z_2$ for
some $z_1, z_2 \in K_\alpha \subseteq S_\alpha$. Hence $S_\alpha$ is
an Archimedean ordered semigroup. This shows that $S$ is complete
semilattice of Archimedean ordered semigroups.

Also it is obvious that $S$ is $\pi-$regular.

$(i)\Rightarrow (viii)$: Suppose that  $S$ is   a complete
semilattice of semigroups $\{S_\alpha \}_{\alpha \in Y}$ and $\rho$
be the corresponding  complete semilattice congruence on $S$. Let
for $\alpha \in Y, \;S_\alpha$ is nil extension of simple ordered
regular semigroup $K_\alpha$. Here we only have to show that
$Intra_\leq (S)= Reg_\leq (S)$. For this, consider $a \in Reg_\leq
(S)$. Then there is $\alpha \in Y$ such that $a \in S_\alpha $ and
there is $s \in S_\alpha$ such that $ a \leq asa$. Also by the
completeness of $\rho, \;(a)_\rho= (sa)_\rho$, so $sa \in E_\leq
(S_\alpha)$. Then there is some $m' \in \mathbb{N}$ such that
$(sa)^{m'} \in K_\alpha$.  Since $K_\alpha $ is an ideal of
$S_\alpha \;and \;for \;all \;n \in \mathbb{N} \;a \leq a (sa )^n $,
it follows that $a \in K_\alpha$. So finally  the simplicity and the
ordered  regularity of $K_\alpha$ yields that $a \in Intra_\leq
(S)$.

Conversely, let $b \in Intra_\leq (S)$. Then there are  $x, y \in
S^1$  and $\gamma \in Y$ such that $b \leq xb^2 y$ and  $b \in
S_\gamma$. Now for all $n \in \mathbb{N}, b \leq (xb)^n by^n$. Also
by completeness of $\rho$ we have that $(b)_\rho= (xby)_\rho= (xb xb
by^2)_\rho= (xb xby)_\rho= (xb)_\rho (xby)_\rho= (xb)_\rho (b)_\rho=
(xb)_\rho$. Since $xb \in S_\gamma$, there is $m^{''} \in\mathbb{N}$
such that $(xb)^{m^{''}} \in K_\gamma$, where $S_\gamma$ is  nil
extension of simple ordered regular semigroup $K_\gamma$. Thus $b
\in K_\gamma$. Therefore  by the ordered regularity of $K_\gamma$ it
follows that $a \in Reg_\leq (S)$. Hence $Intra_\leq (S)= Reg_\leq
(S)$.

$(viii)\Rightarrow(i)$ Suppose that  $S$ is a complete semilattice
of semigroups $S_\alpha (\alpha \in Y)$, where $S_\alpha$ is nil
extension of simple semigroup $K_\alpha$. Let $a \in K_\alpha$. Then
by the simplicity of $K_\alpha, \;a \in Intra_\leq (S)= Reg_\leq (S)
$. This implies that $a \leq axa \leq a (xa)^n xa $ for some $x \in
S$ and for all $n \in \mathbb{N}$. The completeness of $\rho$ gives
that $(a)_\rho= (xa)_\rho$. Thus  $xa \in S_\alpha$ and so for some
$m\in \mathbb{N}, \;(xa)^m \in K_\alpha$, that is $(xa)^m x \in
K_\alpha$. Hence $K_\alpha$ is ordered regular and so $S$ is
complete semilattice of nil extensions simple ordered regular
semigroups.
\end{proof}

\begin{Theorem}
The following conditions are equivalent on an ordered semigroup $S$:
\begin{itemize}
\item[(i)] $S$ is complete semilattice  of nil extensions of left group like ordered
semigroups;
\item[(ii)] $S$ is complete semilattice  of nil extensions of left simple and right regular ordered
semigroups;
\item[(iii)] $S$ is complete semilattice $Y$ of nil extensions of completely regular ordered  semigroups
$K_\alpha(\alpha \in Y)$ and every $\lc-$class of $S$ that contains
a completely regular element $a \;(a \in S_\alpha)$ is $K_\alpha$;
\item[(iv)]$S$ is completely $\pi-$regular  and is a complete semilattice  of left Archimedean ordered semigroups.
\item[(v)]for all $a, b \in S$ there is $n \in \mathbb{N}$ such
that $(ab)^n \in ((ab)^n aSb(ba)^n]$;
\item[(vi)] $S$ is $\pi-$regular, and for all $a \in S$,  $e \in E_{\leq}(S)$, $a \mid e$ implies $a
\mid_l e$;
\item[(vii)]$S$ is complete semilattice $Y$ of nil extensions of completely regular ordered  semigroups
$K_\alpha(\alpha \in Y)$ and every $\lc-$class of $S$ that contains
an ordered  idempotent $e \;(e \in S_\alpha)$ is $K_\alpha$;
\item[(viii)] $S$ is complete semilattice of nil extensions of completely regular semigroups $K_\alpha$ and
every $\lc-$class of $S$ that contains an  ordered regular element
$a \;(a \in S_\alpha)$ is $K_\alpha$;
\item[(ix)]$S$ is complete semilattice  of nil extensions left simple
semigroups and $LReg_\leq(S)= Reg_\leq (S)$;
\item[(x)] $S$ is complete semilattice  of nil extensions left
Clifford ordered  semigroups $K_\alpha$ and every $\lc-$class of $S$
containing a regular element $a \;(a \in S_\alpha )$ is  $K_\alpha$.
\end{itemize}
\end{Theorem}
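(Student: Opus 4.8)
\noindent\emph{Proof plan.}
The plan is to prove the equivalence of the ten statements by a web of implications organised, as in the proof of Theorem~4.1, around the single condition (i): I would prove $(i)\Rightarrow(j)$ for every other $j$ together with a handful of cheap reverse implications that close the circle. The guiding observation is that for a nonempty ordered semigroup ``left simple'' and ``left group like'' coincide: if $S$ has no proper left ideal then $(Sb]=S$, so every $a$ satisfies $a\le xb$ for some $x$, and conversely. Hence the first halves of (ii) and (ix) literally contain the statement of (i), giving $(ii)\Rightarrow(i)$ and $(ix)\Rightarrow(i)$ at once; and if in some $K_\alpha$ the relation $\lc$ is universal then all principal left ideals of $K_\alpha$ agree and equal $K_\alpha$, so $K_\alpha$ is left simple, which gives $(iii)\Rightarrow(i)$, $(vii)\Rightarrow(i)$, $(viii)\Rightarrow(i)$ and $(x)\Rightarrow(i)$ cheaply. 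The block $(i)\Leftrightarrow(iv)\Leftrightarrow(v)\Leftrightarrow(vi)$ I would treat by the same reformulations used in Theorem~4.1 --- Theorem~2.8 of \cite{ke1} to pass between ``complete semilattice of left Archimedean ordered semigroups'' and the combinatorial shape of (v), and Lemma~2.6 for the idempotent-divisibility form (vi). So the genuinely new work is the forward implications out of (i) to (ii), (iii), (vii), (viii), (x).

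For these, fix a complete semilattice congruence $\rho$ with classes $S_\alpha\ (\alpha\in Y)$, where $S_\alpha$ is a nil extension of an LGLO (hence left simple) kernel $K_\alpha=K(S_\alpha)$; as in the proof of Theorem~4.1, $\rho$ being a semilattice congruence yields for any $a,b\in S$ an index $\gamma$ with $ab,\ ba,\ a^{2}b$ all in $S_\gamma$, and for each $a\in S$ some power lands in a $K_\alpha$. The first and most substantial step is $(i)\Rightarrow(ii)$: I must show each $K_\alpha$ is right regular. Iterating the LGLO inequality gives $a\le x^{n}a^{n+1}$ for all $n$ and every $a\in K_\alpha$; completeness of $\rho$ together with the nil-extension structure of $S_\alpha$ would then be used to produce an ordered idempotent of $S_\alpha$ lying in $K_\alpha$ and comparable with a power of $a$, after which left simplicity of $K_\alpha$ pulls this back to $a\le a^{2}t$ with $t\in K_\alpha$; the same reasoning then upgrades $K_\alpha$ to a completely regular, indeed (by the structure theory of \cite{bh1}) left Clifford, ordered semigroup, which is exactly what (iii), (vii), (viii), (x) require of the kernels. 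I expect this to be the main obstacle: the passage from ``left simple kernel'' to ``left simple and right regular kernel'' --- equivalently, the production of the idempotents on which everything downstream, including the $\pi$-regularity of $S$ demanded by (vi), rests --- is where the completeness of the semilattice congruence must be exploited carefully.

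Granting right regularity of the kernels, $(i)\Rightarrow(v)$ runs thus: pick $n$ with $(ab)^{n},(ba)^{n}\in K_\gamma$, use right regularity of $K_\gamma$ to get $(ab)^{n}\le(ab)^{2n}z$ with $z\in K_\gamma$ and left simplicity to get $z\le w(ba)^{n}$ with $w\in K_\gamma$, and then massage $(ab)^{n}\le(ab)^{2n}w(ba)^{n}$ into $(ab)^{n}\in\big((ab)^{n}aSb(ba)^{n}\big]$ using the shift identities $(ab)^{k}a=a(ba)^{k}$ and $(ba)^{k}b=b(ab)^{k}$ (with, if need be, one further use of left simplicity to arrange that a letter $b$ falls in the right place); the parallel computation yields $e\le x'a$ from $e\le xay$ for $(i)\Rightarrow(vi)$. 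The forward implications to the $\lc$-descriptions all follow one template: for a regular / ordered-idempotent / completely regular element $a\in S_\alpha$, completeness of $\rho$ forces anything $\lc$-related to $a$ into $S_\alpha$, while left simplicity of $K_\alpha$ makes $a$ and any element of $K_\alpha$ mutually left-divisible after passing to powers inside $K_\alpha$, so the $\lc$-class of $a$ both contains and is contained in $K_\alpha$; that it is no larger inside $S_\alpha$ is because a nilpotent of $S_\alpha\setminus K_\alpha$ cannot be $\lc$-equivalent to a regular element. Finally $(i)\Rightarrow(ix)$ needs only $LReg_\le(S)=Reg_\le(S)$ beyond the observed ``left simple is LGLO'': $\subseteq$ is clear, and for $\supseteq$ one takes $a\in Reg_\le(S)$ with $a\le asa$, notes $(a)_\rho=(sa)_\rho$ by completeness so that a power of $sa$ sits in $K_\alpha$, whence $a\in K_\alpha$ and left simplicity exhibits $a$ as left regular. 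Thus (i)--(x) become linked; the one delicate input is the idempotent-production inside $(i)\Rightarrow(ii)$, together with the care needed to order the implications so that no ``$\Rightarrow(i)$'' step quietly presupposes what it should produce.
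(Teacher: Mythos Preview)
Your organisation differs from the paper's in a way that matters. The paper does \emph{not} take (i) as the hub: its cycle runs essentially through (ii) and (iii), namely $(ii)\Rightarrow(iii)\Leftrightarrow(viii)\Rightarrow(ix)\Rightarrow(i)$, $(iii)\Rightarrow(ii)$, $(iii)\Rightarrow(iv)\Rightarrow(v),(vi)$, $(vii)\Rightarrow(iii)$, $(ii)\Rightarrow(i)$, and $(i)\Leftrightarrow(ix)$, $(i)\Leftrightarrow(x)$. In particular the paper never attempts $(i)\Rightarrow(ii)$ directly, and the implications you call ``cheap'' --- $(ii)\Rightarrow(i)$, $(ix)\Rightarrow(i)$ --- are precisely where the paper does real work: in each of those it first proves that $K_\alpha$ is \emph{ordered regular} and only then concludes that $K_\alpha$ is left group like. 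Likewise, in the paper's $(i)\Rightarrow(ix)$ the inclusion $Reg_\leq(S)\supseteq LReg_\leq(S)$ is obtained by observing that a left regular element lands in some $K_\alpha$ and is therefore regular because $K_\alpha$ is.

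The gap in your plan is the identification ``LGLO $=$ left simple''. That matches the literal sentence in Section~2, but the paper is using LGLO in the sense ``left simple \emph{and} ordered regular'' (the ordered analogue of a left group), as the proofs just cited show. With your reading, the step you yourself flag as the main obstacle --- manufacturing right regularity of a merely left-simple $K_\alpha$ from the ambient semilattice/nil-extension data --- is not just delicate, it is impossible: take $S=K$ to be any left simple semigroup without idempotents (the opposite of a Baer--Levi semigroup, with the trivial order). Then (i) in your reading holds as a one-class complete semilattice with trivial nil extension, yet $S$ is not $\pi$-regular, so (vi) fails and a fortiori (ii) fails. Your sketch for $(i)\Rightarrow(ii)$ (``produce an ordered idempotent \ldots comparable with a power of $a$'') presupposes exactly the idempotent whose existence is the issue. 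If instead you adopt the paper's reading of LGLO, then your ``cheap'' reverse arrows $(ii)\Rightarrow(i)$, $(ix)\Rightarrow(i)$, $(iii),(vii),(viii),(x)\Rightarrow(i)$ all require the regularity arguments the paper supplies, and at that point your scheme collapses back into something close to the paper's own web.
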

\begin{proof}
$(ii)\Rightarrow (i)$: Suppose that $S$ is complete semilattice $Y$
of semigroups $\{S_\alpha\}_{\alpha \in Y}$ and let $S$ be a nil
extension of a left simple and right regular semigroup $K_\alpha$.
Choose  $a \in K_\alpha$. Since $K_\alpha$ is right regular,  there
is  some $x \in K_\alpha$ such that $ \;a \leq a^2 x$. For $ax, a
\in K_\alpha$, the simplicity of $k_\alpha$ yields that  $a \in
(aK_\alpha a]$. Thus $K_\alpha $ is ordered regular. Hence $S$ is
nil extension of left group like ordered semigroup.

$(ii)\Rightarrow (iii)$: Let $S$ be complete semilattice of
semigroups $\{S_\alpha\}_ {\alpha \in Y}$ and let $S_\alpha$ be a
nil extension of a left simple and right regular semigroup
$K_\alpha$. Let $a \in K_\alpha$. Since $K_\alpha$ is right regular,
$a \in (a^2K_\alpha]$. Also as  $K_\alpha$ is left simple we have
that  $a \in (a^2 K_\alpha a^2]$. Thus  $K_\alpha$ is a completely
regular ordered semigroup.

Next, consider a $\lc-$class $L$ of $S$ that contains a complete
regular element $a$, where $a \in S_\alpha$ for some $ \alpha \in
Y$. By Lemma 2.8 it follows that $a \in K_\alpha$ and $L \subseteq
K_\alpha$.

Now to show that $K_\alpha \subseteq L$ let us choose $y \in
K_\alpha$. By the simplicity of $K_\alpha$ we have that $y \leq za$
for some $z \in K_\alpha$. Since $za \in K_\alpha$ it follows that $
\;y \in K_\alpha$.

$(iii)\Rightarrow (iv)$:  Suppose that $S$ is complete semilattice
$Y$ of semigroups $\{S_\alpha\}_ {\alpha \in Y}$. Let $a \in S$.
Then there is $\alpha \in Y$ such that $a \in S_\alpha$. Let
$S_\alpha$ be a nil extension of completely regular ordered
semigroup $K_\alpha $. Then there is $m \in \mathbb{N}$ such that
$a^m \in (a^{2m} S_\alpha a^{2m}] \subseteq (a^{m+1} S a^{m+1}]$.
This shows that $S$ is completely $\pi-$regular ordered semigroup.

Now choose $a, b \in S_\alpha$. Then there is $m' \in \mathbb{N}$
such that $a^{m'}, a^{m'}b \in K_\alpha$. Consider the $\lc-$class
$L_{a^{m'}}$ of $a^{m'} \in S$. Since $a^{m'} \in Reg_(S)$, using
the given condition we have that $L_{a^{m'}}= K_\alpha$. So for
$a^{m'}, a^{m'}b \in L_{a^{m'}}, \;a^{m'} \in (K_\alpha b] \subseteq
(S_\alpha b]$. Hence $S$ is complete semilattice of left Archimedean
semigroups.

$(iv)\Rightarrow (v)$: This is  obvious.

$(iv)\Rightarrow (vi)$: Let $S$ satisfies the given condition. Then
clearly $S$ is $\pi-$regular. Let $\rho$ be the complete semilattice
congruence on $S$. Choose $a \in S$ and $e \in E_\leq (S)$ such that
$a \mid e$. Then there are $x, y \in S^1$ such that $e \leq xay $.
This implies that $(e)_{\rho}= (exya)_{\rho}$. Since $(e)_{\rho}$ is
left Archimedean and $e, \;exya \in (e)_{\rho}$, there is $m^{''}
\in \mathbb{N}$ such that $e^{m^{''}} \in ((e)_{\rho} exya]$. Thus
$e \in (Sa]$. That is $a \mid_{l} e$

$(iv)\Rightarrow (vi)$ Let $L$ be $\lc-$class of $S$ containing an
ordered idempotent $e \in S_\alpha $. Then by lemma(2.8), $e \in
K_\alpha \;and \;K_\alpha \subseteq S_\alpha $. Again by the
simplicity of $K_\alpha, K_\alpha \subseteq L$. Hence $L= K_\alpha$.

$(iii)\Leftrightarrow(viii)$:  Here we just have to show the later
part. Let consider an $\lc-$class $L$ of $S$, so that $L$ contains a
regular element $a \in S_\alpha$. Then there is $m \in \mathbb{N}$
such that $a^m \in K_\alpha$. By the regularity of $a$, there is $x
\in S$ such that $a \leq a (xa)^n$ for all $n \in \mathbb{N}$. So
for some $l \in \mathbb{N}, \;(xa)^l \in K_\alpha$. Now $a^m \in
Gr_\leq(S)$, and $K_\alpha $ is the $\lc-$class of $S$ containing
the complete regular element $a^m$, which also contains the regular
element $a$.

Converse is obvious.

$(viii)\Rightarrow(ix)$: Suppose that $S$ is complete semilattice
$Y$ of semigroups $S_\alpha \;(\alpha \in Y)$ and $S_\alpha $ is a
nil extension of complete  regular semigroup $K_\alpha$. Let $\rho$
be the corresponding complete semilattice.  Consider $a, b \in
K_\alpha$ and $L_a, \;L_b$ are the $\lc-$classes of $a, b $
respectively. Using the given condition we have $L_a= K_\alpha=
L_b$. This yields that $a \in L_b$, that is, $a \in (Sb]$. Since
$K_\alpha$ is completely regular ordered semigroup, there is $y \in
K_\alpha$ such that $b \leq byb$. Also $a \in L_b$ implies that $a
\leq xb$ for some $x \in S$. So $a \leq (xby) b$. Then by the
completeness of $\rho$ we have that
$(a)_\rho=(axbyb)_\rho=(xbyb)_\rho= (xb)_\rho$. Therefore $xb \in
S_\alpha$. Since $K_\alpha$ is an ideal of $S_\alpha, \;xby \in
K_\alpha$. Hence $a \in (K_\alpha b]$ so that  $K_\alpha $ is left
simple. Therefore $S$ is complete semilattice of  a nil extension of
left simple semigroups.

Now consider $a \in LReg_\leq (S)$. Then for all $n \in \mathbb{N}$
$ \;a \leq xa^2 \leq..........\leq x^n a^{n+1}$ for some  $x \in
\S$. Then for some $ l\in \mathbb{N}, \;x^l a^{l+1} \in K_\alpha$
and since $K_\alpha $ is an ideal of $S_\alpha$,  $a \in K_\alpha$.
Thus $LReg_\leq (S)\subseteq Reg_\leq (S)$. On the other hand,
choose $b \in Reg_\leq (S)$. Then $b \in S_\beta$ for some $\beta
\in Y$, where $S_\beta$ is nil extension of left simple semigroups
$K_\beta$. Since $b \in Reg_\leq(S) $, there is $z \in S$ such that
$b \leq b (zb)^n$ for all $n \in \mathbb{N}$. Now by the
completeness of $\rho, \;zb \in S_\beta$, which follows that
$(zb)^{m_1} \in K_\beta$ for some $m_{1} \in \mathbb{N}$. So $b \in
K_\beta$. Since $K_\beta$ is left simple, $b \in (K_\beta b^2]
\subseteq (Sb^2]$. Thus $Reg_\leq (S)= LReg_\leq (S)$. This
completes the proof.

$(ix)\Rightarrow(i)$: Let $S$ be  complete semilattice $Y$ of left
simple semigroups $K_\alpha \;(\alpha \in Y)$ and $\rho $ be the
corresponding complete semilattice congruence on $S$. To show that
each $K_\alpha(\alpha \in Y)$ are left group like ordered
semigroups, first choose $a \in K_\alpha$. Then by the left
simplicity of $K_\alpha, \;a \in (Sa^2]$. Therefore $a \in LReg_\leq
(S)= Reg_\leq(S)$. And so there is $y \in S$ such  that $a \leq aya$
and obviously $a \leq a (ya)^n ya$ for all $n \in \mathbb{N}$. Now
by the completeness of $\rho, \;(a)_\rho= (a^2ya)_\rho= (ya)_\rho$,
so $ya \in S_\alpha$. Thus for some $m \in \mathbb{N}, \;(ya)^m \in
K_\alpha$ and so $(ya)^my \in K_\alpha$. This implies that $a \leq a
(ya)^m y a$, where $(ya)^m y\in K_\alpha $. Therefore $K_\alpha$ is
ordered regular subsemigroup. Also $K_\alpha$ is left simple. Hence
$K_\alpha$ is left group like ordered
semigroup.

$(iii)\Rightarrow(ii)$: Let $S$ be a complete semilattice $Y$ of
semigroups $\{S_\alpha\}_{\alpha \in Y}$ and let each $S_\alpha$ are
nil extension of complete regular ordered semigroup $K_\alpha$.

Consider $a \in K_\alpha$ and $L_a$ be the $\lc-$class of $a \in S$.
Since $a \in Gr_\leq(S)$ by the given condition we have that  $L_a=
K_\alpha$. This implies that $K_\alpha$ is left simple. Also, by the
complete regularity of $a,$ it is evident that $a$ is right regular.
Hence $S $ is complete semilattice congruence of left simple and
right regular ordered semigroups.

$(vii) \Rightarrow( iii)$: Here we are only to show that every
$\lc-$class of $S$ that contains a completely regular element $a$
(where $a \in S_\alpha$) is $K_\alpha$. For this let us consider a
$\lc-$class $L$ of $S$ that contains a completely regular element
$a$ where $a \in S_\alpha$ for some $\alpha \in Y$.  Since $a \in
Reg_\leq(S)$ we have that $a \in K_\alpha$, by Lemma 2.8. Since
$K_\alpha$ is a completely regular ordered semigroup there is  $h
\in K_\alpha$ such that $a \leq aha$. Clearly $ha \in E_\leq(S)$.
Also $a  \lc ha$. Therefore $ha \in L$. Thus $L$ is a $\lc-$class of
$S$ that contains an ordered idempotent $ha \in S_\alpha$. Therefore
by the given condition $L= K_\alpha$.

$(i)\Rightarrow(ix)$:  First suppose that $S$ is complete
semilattice $Y$ left group like ordered semigroups $K_\alpha (\alpha
\in Y)$. So $S$ is complete semilattice of nil extension of left
simple ordered semigroups $K_\alpha$.

Now choose $a \in LReg_\leq(S)$. By Lemma 2.8, it follows that $a
\in K_\alpha$, that is $a \in Reg_\leq(S)$.

Conversely, let $a \in Reg_\leq (S)$. By Lemma 2.8, it is evident
that $a \in K_\alpha$. Then by the simplicity of $K_\alpha, \;a\in
(K_\alpha a^2] \subseteq (S a^2]$. Hence $LReg_\leq(S)= Reg_\leq
(S)$.

$(i)\Leftrightarrow(x)$: Let $S$ be a  complete semilattice $Y$ left
group like ordered semigroups $K_\alpha (\alpha \in Y)$ and $\rho $
be the corresponding complete semilattice congruence on $S$. Choose
$a, b \in K_\alpha$. Then for $ab, a \in K_\alpha$, the left
simplicity of $K_\alpha$, yields that $ab \in (K_\alpha  a]$. Hence
$S$ is complete semilattice of left Clifford ordered semigroups.

Now, let $L$ be a $\lc-$class  of $S$ containing a regular element
$a$. Then by Lemma 2.8, $a \in K_\alpha $. Choose $y \in K_\alpha$.
Then  by the left simplicity of $K_\alpha$ together with $a \in
K_\alpha$ yields that $y \lc a, \;so  \;y \in L$, that is,
$L\subseteq K_\alpha$. Also by Lemma 2.8, $L \subseteq K_\alpha$.
Hence the condition  follows.

Conversely, let $S$ be complete semilattice $Y$ of nil extension of
left Clifford ordered semigroups $K_\alpha(\alpha \in Y)$.let $a,b
\in K_\alpha$. Since  $K_\alpha$ is a left Clifford ordered
semigroup, $ \;ab \leq sa$, for some $s \in K_\alpha$. Next consider
the $\lc-$class $L_a$ that contains $a$, obviously then $L_a=
K_\alpha$. Thus $ab, a \in L_a$. So $a \lc ab$. Then there is $z \in
S$ such that $a \leq zab$. Since $a \in K_\alpha, \;za \in
K_\alpha$. This implies that $K_\alpha$ is left simple. Hence $S$ is
complete semilattice of left group like ordered semigroup.
\end{proof}

\bibliographystyle{plain}

\end{document}